\def \bdf{\begin{definition}}
\def \edf{\end{definition}}
 \def \ad{\end{document}}
\newcounter{corollary}
\newcounter{proposition}
\newcounter{definition}
\newcounter{remark}
\newcounter{example}
\newcounter{lemma}
\newcounter{result}
\newcounter{theorem}
\newtheorem{theorem}[theorem]{Theorem}
 \newtheorem{result}[theorem]{Result}
\newtheorem{corollary}[theorem]{Corollary}
\newtheorem{definition}[theorem]{Definition}
\newtheorem{lemma}[theorem]{Lemma}
\newtheorem{proposition}[theorem]{Proposition}
\newtheorem{remark}[theorem]{Remark}
\newenvironment{proof}[1][Proof]{\noindent\textbf{#1.} }{\
\rule{0.5em}{0.5em}} \textheight=8.85in \textwidth=15.5cm
\def \n1{\newpage}
\def \no1{\noindent}
\def \fr{\frac}
\def \bprf{\begin{proof}{}}
\def \eprf{\end{proof}{}}
\def \blem{\begin{lemma}}
\def \bth{\begin{theorem}}
\def \brs{\begin{result}\rm}
\def \ers{\end{result}}
\def \eth{\end{theorem}}
\def \brem{\begin{remark}}
\def \erem{\end{remark}}
\def \bpr{\begin{proposition}}
\def \epr{\end{proposition}}
\def \bcor{\begin{corollary}}
\def \ecor{\end{corollary}}
\def \elem{\end{lemma}}
\def \1{\begin{eqnarray}}
\def \2{\end{eqnarray}}
\def \3{\begin{eqnarray*}}
\def \4{\end{eqnarray*}}
\def \6{\vspace*{7mm}}
\def \bqn{\begin{equation}}
\def \eqn{\end{equation}}
\def \s1{\subseteq}
\def \l{\left}
\def \r{\right}
\def \bt{\begin{tabular}}
\def \et{\end{tabular}}
\def \l{\left}
\def \r{\right}
\def \hs1{\hspace*{3mm}}
 \def \s1{\sqrt}
\def \q2{\hspace*{9mm}}
\def \un1{\underline}
\def \vs1{\vspace*{4mm}}
\def \ba{\begin{array}}
\def \ea{\end{array}}
\newcommand{\ec}{\end{center}}
\newcommand{\bc}{\begin{center}}
\newcommand{\be}{\begin{equation}}
\newcommand{\ee}{\end{equation}}
\newcommand{\bn}{\begin{enumerate}}
\newcommand{\en}{\end{enumerate}}
\newcommand{\bi}{\begin{itemize}}
\newcommand{\ei}{\end{itemize}}
\begin{document}

\begin{center}
{\bf LATTICE PATHS AND ORDER-PRESERVING
PARTIAL TRANSFORMATIONS}\\[4mm]
\textbf{A. Laradji}\\
${}$\\
Department of Mathematics and Statistics\\
King Fahd University of Petroleum \& Minerals \\
Dhahran 31261\\
Saudi Arabia\\
\vspace{0.2cm}
and\\
\vspace{0.2cm}
\textbf{A. Umar}\\
${}$\\
Department of Mathematics and Statistics\\
Sultan Qaboos University \\
Al-Khod PC 123\\
Sultanate of Oman\\

email: \texttt{alaradji\mbox{@}kfupm.edu.sa} and
\texttt{aumarh\mbox{@}squ.edu.om}
\end{center}

\begin{abstract}
Let ${\cal PO}_n$ be the semigroup of all order-preserving partial
transformations of a finite chain.  It is shown that there exist
bijections between the set of certain lattice paths in the
Cartesian plane that start at $(0,0)$, end at $(n-1,n-1)$, and
certain subsemigroups of ${\cal PO}_n$. Several consequences of
these bijections were discussed.
\end{abstract}
\noindent KEYWORDS: SEMIGROUP, SUBSEMIGROUP, PARTIAL
ORDER-PRESERVING TRANSFORMATION, PARTIAL ORDER-DECREASING
TRANSFORMATION, LATTICE PATH, SCHR\"{O}DER NUMBER, CATALAN NUMBER,
CENTRAL DELANNOY NUMBER.

MSC2010: 20M18, 20M20, 05A10, 05A15.
\baselineskip=22pt

\section{Introduction}
\setcounter{equation}{0} Consider a finite chain, say $X_n =
\{1,2,\ldots,n\}$ under the natural ordering and let $T_n$ and
$P_n$ be the full transformation semigroup and the semigroup of
all partial transformations on $X_n$, under the usual composition,
respectively.  We shall call a partial transformation $\alpha: X_n
\rightarrow X_n$, {\em order-decreasing (order-increasing)} or
simply {\em decreasing (increasing)} if $x\alpha \leq x \,(x\alpha
\geq x)$ for all $x$ in $\mbox{Dom }\alpha$, and $\alpha$ is {\em
order-preserving} if $x\leq y$ implies $x\alpha \leq y\alpha$ for
$x,y$ in $\mbox{Dom }\alpha$. This paper establishes bijections
between certain types of lattice paths and certain subsemigroups
of  ${\cal PO}_n$, the semigroup of all order-preserving partial
transformations of $X_n$. Thus combinatorial problems of these
lattice paths can be translated into combinatorial problems of
these subsemigroups of  ${\cal PO}_n$, and vice-versa.\\
\noindent Various enumerative problems of an essentially
combinatorial nature have been considered for certain classes of
semigroups of transformations. For example, it is well known and
indeed obvious that $T_n$ and $P_n$ have orders $n^n$ and
$(n+1)^n$, respectively.  Only slightly less obvious are their
number of idempotents given by
\[
|E(T_n)| = \sum^n_{r=1} \left(\begin{array}{c} n\\ r\end{array}
\right) r^{n-r} \mbox{ and } |E(P_n)| = \sum^{n+1}_{r=1}
\left(\begin{array}{c} n \\ r-1\end{array} \right)r^{n+1-r}.\] The
first usually attributed to Tainiter \cite{Tai} is actually Ex
2.2.2(a) in Clifford and Preston \cite{Cli}.  The second can be
deduced easily via Vagner's method of representing a partial
transformation by a full transformation \cite{Vag}, which has been
used to good effect by Garba \cite{Gar1}.  The following list
(which is by no means exhaustive) of papers and books
\cite{Gar1,Gar2,Gom,Hig1,Hig2,How1,How2,How3,Lar1,Lar2,Lar3,
Lar4,Lar5,Umar1,Umar2} each contains some interesting
combinatorial results pertaining to semigroups of transformations.
Combinatorial properties of ${\cal PC}_n$, the semigroup of all
decreasing and order-preserving partial transformations of $X_n$
have been investigated recently by Laradji and Umar \cite{Lar2}.
One surprising outcome from \cite{Lar2, Lar3} is the discovery of
many integer sequences that are not yet recorded or have just
recently been recorded in Sloane's encyclopaedia of integer
sequences \cite{Slo}. The advantage of the approach in this paper
is that it establishes a connection
between the theories of lattice paths and partial transformations.\\
\noindent In Section 2, we give the necessary definitions that we
need in the paper as well as show that the semigroup ${\cal PC}_n$
of all decreasing and order-preserving partial transformations of
$X_n$, is a disjoint union of two subsemigroups of the same
cardinality. In Section 3, we establish a bijection between a set
of certain lattice paths and ${\cal PC}_n$. Hence we obtain the
order of ${\cal PC}_n$ as the large or double Schr\"{o}der number
\cite{Per,Sta}, and the order of the two subsemigroups as the
small Schr\"{o}der number. We also discuss several consequences of
this bijection. In Section 4, we gather some remarks concerning
the results of the paper and we also give an alternative proof of
the result that set of all idempotents of ${\cal PC}_n$ is of
cardinality $(3^n+1)/2$.

\section{Preliminaries}
\setcounter{equation}{0} \setcounter{lemma}{0}
\setcounter{theorem}{0} \setcounter{corollary}{0}
\setcounter{proposition}{0} \setcounter{remark}{0} For standard
terms and concepts in transformation semigroup theory see
\cite{How1} or \cite{Hig1}.  We now recall some definitions and
notations to be used in the paper.  Consider $X_n =
\{1,2,\ldots,n\}$ and let $\alpha: X_n\rightarrow X_n$ be a
partial transformation. We shall denote by $\mbox{Dom }\alpha$ and
$\mbox{Im }\alpha$, the {\em domain} and {\em image set} of
$\alpha$, respectively. The semigroup $P_n$, of all partial
transformations contains two important subsemigroups which have
been studied recently.  They are $PD_n$ and ${\cal PO}_n$ the
semigroups of all order-decreasing and respectively
order-preserving partial transformations, (see \cite{Umar3} and
\cite{Gar2,Gom,Lar3}). Now let
\begin{equation}
{\cal PC}_n = PD_n \cap {\cal PO}_n
\end{equation}
be the semigroup of all decreasing and order-preserving partial
transformations of $X_n$.  Next let
\begin{equation}
Q_n = \{\alpha \in {\cal PC}_n: 1\in \mbox{ Dom }\alpha\}
\end{equation}
be the set of all maps in ${\cal PC}_n$ all of whose domain does
contain the element 1.  Then evidently we have the following
result.
\begin{lemma}
Both $Q_n$ and its set complement $Q'_n$ are subsemigroups of
${\cal PC}_n$. Moreover, $Q_n\cdot Q'_n = Q'_n\cdot Q_n = Q'_n$.
\end{lemma}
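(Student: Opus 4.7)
The plan is to exploit a single crucial observation: because every $\alpha \in {\cal PC}_n$ is order-decreasing, whenever $1 \in \mbox{Dom }\alpha$ we must have $1\alpha \leq 1$, hence $1\alpha = 1$. This reduces the whole lemma to tracking whether $1$ lies in the domain of a composition.

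First, I will verify that $Q_n$ is a subsemigroup. Given $\alpha,\beta \in Q_n$, the key observation gives $1\alpha = 1$, and since $1 \in \mbox{Dom }\beta$ with $1\beta = 1$, the composite $\alpha\beta$ is defined at $1$ and sends $1$ to $1$, so $\alpha\beta \in Q_n$. For $Q'_n$, the argument is even easier: if $1 \notin \mbox{Dom }\alpha$ then $1 \notin \mbox{Dom }(\alpha\beta)$ for any $\beta$, so closure of $Q'_n$ under products (and more generally $Q'_n \cdot Q_n \subseteq Q'_n$, $Q'_n \cdot {\cal PC}_n \subseteq Q'_n$) is immediate. Similarly, for $\alpha \in Q_n$, $\beta \in Q'_n$, we have $1\alpha = 1 \notin \mbox{Dom }\beta$, giving $1 \notin \mbox{Dom }(\alpha\beta)$, so $Q_n \cdot Q'_n \subseteq Q'_n$.

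To finish the equalities $Q_n \cdot Q'_n = Q'_n = Q'_n \cdot Q_n$, I only need the reverse inclusions. The trick is that the identity map $\mathrm{id}_{X_n}$ on $X_n$ lies in $Q_n$ (it is order-preserving, trivially decreasing, and has $1$ in its domain) and acts as a two-sided identity on ${\cal PC}_n$. Therefore, for any $\gamma \in Q'_n$, we can write $\gamma = \mathrm{id}_{X_n} \cdot \gamma \in Q_n \cdot Q'_n$ and $\gamma = \gamma \cdot \mathrm{id}_{X_n} \in Q'_n \cdot Q_n$, establishing both reverse inclusions.

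There is no real obstacle here; the only point one must not overlook is that the reverse inclusions for the product identities require an element of $Q_n$ that acts as an identity, which is supplied by $\mathrm{id}_{X_n}$. All other steps are immediate from the defining property that $1$ is a fixed point of any $\alpha \in {\cal PC}_n$ containing $1$ in its domain.
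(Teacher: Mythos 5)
Your proof is correct, and it is essentially the argument the paper has in mind: the paper states this lemma without proof (``evidently''), and your verification rests on exactly the fact the authors themselves invoke in the proof of the next lemma, namely that $1\alpha=1$ for every $\alpha\in Q_n$ by the order-decreasing property, together with $\mbox{Dom}(\alpha\beta)\subseteq\mbox{Dom}\,\alpha$. Your use of $\mathrm{id}_{X_n}\in Q_n$ to get the reverse inclusions $Q'_n\subseteq Q_n\cdot Q'_n$ and $Q'_n\subseteq Q'_n\cdot Q_n$ is a clean way to make the equalities, not just the inclusions, explicit.
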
 Less evidently, we have
\begin{lemma}
$|Q_n| = |Q'_n|$. \end{lemma}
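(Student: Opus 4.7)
The plan is to exhibit an explicit size-preserving bijection $\phi \colon Q_n \to Q_n'$. The key observation that makes this feasible is that an element $\alpha \in \mathcal{PC}_n$ with $1 \in \mathrm{Dom}\,\alpha$ has no choice at all about what to do with $1$: since $\alpha$ is order-decreasing and takes values in $X_n = \{1,2,\ldots,n\}$, the inequalities $1 \leq 1\alpha \leq 1$ force $1\alpha = 1$. So the only genuine data in an element of $Q_n$ lies in its behaviour on $\mathrm{Dom}\,\alpha \setminus \{1\}$, and this data is exactly what an element of $Q_n'$ carries.

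With this observation, I would define $\phi \colon Q_n \to Q_n'$ by setting $\phi(\alpha)$ to be the restriction of $\alpha$ to $\mathrm{Dom}\,\alpha \setminus \{1\}$, and conversely $\psi \colon Q_n' \to Q_n$ by letting $\psi(\beta)$ be the extension of $\beta$ to $\mathrm{Dom}\,\beta \cup \{1\}$ that sends $1$ to $1$. Then $\phi(\alpha)$ is still order-preserving and order-decreasing (any restriction of such a map is), and $1 \notin \mathrm{Dom}\,\phi(\alpha)$, so $\phi(\alpha) \in Q_n'$.

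The only nontrivial point in the whole argument is checking that $\psi(\beta)$ lies in $\mathcal{PC}_n$. Decreasingness is immediate because $1\psi(\beta) = 1$ and $\psi(\beta)$ agrees with $\beta$ elsewhere. For order-preservingness, one needs only to compare $1$ against any $y > 1$ in $\mathrm{Dom}\,\beta$: the required inequality $1\psi(\beta) \leq y\psi(\beta)$ becomes $1 \leq y\beta$, which holds automatically since $y\beta \in X_n$; and comparisons among elements of $\mathrm{Dom}\,\beta$ already hold by hypothesis on $\beta$. Hence $\psi(\beta) \in Q_n$.

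Finally, $\phi \circ \psi$ and $\psi \circ \phi$ are identity maps: $\psi$ adds the pair $(1,1)$ to a map whose domain does not contain $1$, and $\phi$ removes it, and by the forced value $1\alpha = 1$ for $\alpha \in Q_n$ these operations are inverse to each other. Therefore $\phi$ is a bijection and $|Q_n| = |Q_n'|$. I do not expect any serious obstacle; the whole proof is essentially the remark that the element $1$ is, in the presence of decreasingness, redundant data.
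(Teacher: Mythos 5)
Your proof is correct and is essentially the same as the paper's: both use the bijection that restricts $\alpha\in Q_n$ to $\mathrm{Dom}\,\alpha\setminus\{1\}$ and, in the other direction, extends $\beta\in Q_n'$ by the forced value $1\mapsto 1$ (forced since $1\alpha\le 1$ by decreasingness). Your write-up simply makes explicit the routine verifications (membership in ${\cal PC}_n$ and mutual inverseness) that the paper leaves implicit.
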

\begin{proof}{}
Define a map $\phi$ from $Q_n$ into $Q'_n$ by
\[
\phi(\alpha) = \alpha' \quad (\alpha\in Q_n, \alpha' \in Q'_n)
\]
where
\[
x\alpha' = x\alpha \quad (\mbox{for all } x\in \mbox{Dom
}\alpha\setminus \{1\}).\] It is clear that $\phi$ is a bijection
since $1\not\in \mbox{Dom }\alpha'$ for all $\alpha'$ in $Q'_n$,
and $1\alpha = 1$ for all $\alpha$ in $Q_n$. Informally, we may
argue as follows: to get $Q'_n$ from $Q_n$ we delete 1 from
$\mbox{Dom }\alpha$ (for each $\alpha \in Q_n$), if
$|1\alpha^{-1}| \geq 2$; otherwise we delete 1 from $\mbox{Im
}\alpha$ as well. Conversely, to get $Q_n$ from $Q'_n$ we extend
each $\alpha' \in Q'_n$ by defining $1\alpha = 1$ and $x\alpha = x
\alpha'$ for all $x\geq 2$.
\end{proof}

\section{The order of ${\cal PC}_n$}
\setcounter{equation}{0} \setcounter{lemma}{0}
\setcounter{theorem}{0} \setcounter{corollary}{0}
\setcounter{proposition}{0} \setcounter{remark}{0}

Our main objective in this section is to give an alternative proof
for $|{\cal PC}_n|$. We begin our investigation by considering
lattice paths in the Cartesian plane that start at $(0,0)$, end at
$(n,n)$, contain no points above the line $y=x$, and composed only
of steps $(1,0), (0,1)$ and $(1,1)$, i.e., $\rightarrow, \uparrow$
and $\nearrow$. We shall call such lattice paths {\em type I}, in
this paper. The diagrams in Figure 1 illustrate all such paths in
the $1\times 1$ and $2 \times 2$ squares, respectively. The total
number of such paths is known to be the large or double
Schr\"{o}der number $r_n$, \cite{Per}.

\begin{figure}[!htb]
\begin{center}
\epsfig{file=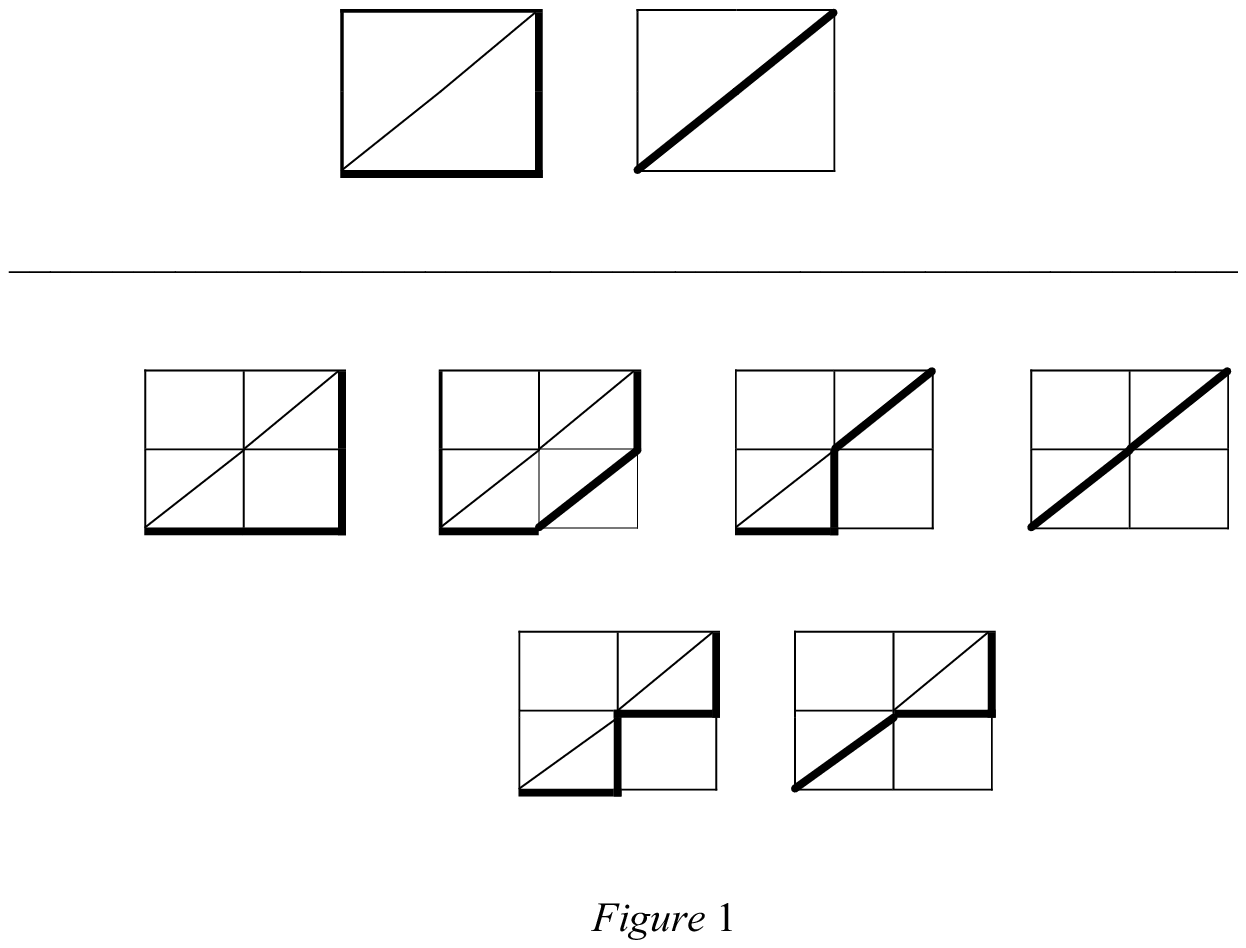}
\end{center}
\end{figure}

To establish a bijection between the set of type I paths (in an
$n\times n$ square) and the set of all decreasing and
order-preserving partial transformations of $X_n$, we first make
the following observation:
\begin{enumerate} \item[(OB1)] In each lattice path from $(0,0)$ to $(n-1,n-1)$
there are exactly same number of horizontal steps as there are
vertical steps.
\end{enumerate}
To see this, we note that a horizontal step corresponds to the map
$h: (a,b)\rightarrow (a+1,b)$, a vertical step corresponds to the
map $v: (a,b)\rightarrow (a,b+1)$, and a diagonal step corresponds
to the map $d: (a,b)\rightarrow (a+1,b+1)$. It is easy to check
that $hv=vh, hd=dh$ and $vd=dv$. Now if a path starts from $(0,0)$
and ends at $(n,n)$ using only the above steps then each such path
can be written as a product: $h_1h_2 \cdots h_rv_1v_2 \cdots
v_sd_1d_2 \cdots d_t$, and so we have
\[
(n,n)=h_1h_2 \cdots h_rv_1v_2 \cdots v_sd_1d_2 \cdots
d_t(0,0)=(t+r,t+s).\] Hence $t+r=n=t+s$, which implies $r=s$.

Next, we note that it is convenient (in this section) to express
$\alpha$ in ${\cal P}{\cal C}_n$ (with base set $X_n =
\{0,1,2,\ldots, n-1\}$) as
\begin{equation}
\alpha = \left(\begin{array}{cccc}
a_1 & a_2 \cdots a_r\\
b_1 & b_2 \cdots b_r \end{array} \right)
\end{equation}
where the $a_i$'s are distinct, but the $b_i$'s are not
necessarily distinct.  Moreover, we may also assume that $0 \leq
a_1 < a_2 \cdots < a_r \leq n-1$ and $0 \leq b_1 \leq b_2 \leq
\cdots \leq b_r \leq n-1$. Thus
\[
\mbox{Dom }\alpha =\{a_1, a_2, \ldots a_r\},\] however, we refer
to the sequence $(b_1, b_2, \ldots, b_r)$ as the {\em simage} of
$\alpha$, denoted by
\[
\mbox{Sim } \alpha = (b_1, b_2, \ldots, b_r).\] (Note that if all
the $b_i$'s are distinct then $\mbox{Sim } \alpha$ coincides with
$\mbox{Im } \alpha$, otherwise $\mbox{Sim } \alpha \neq \mbox{Im
}\alpha $ ) Now to each vertical step from $(i,j)$ to $(i, j+1)$
in an arbitrary type I path (in an $n\times n$ square) we put $j$
in $\mbox{Dom } \alpha$, and to each horizontal step from $(i,j)$
to $(i+1,j)$ we put $j$ in $\mbox{Sim }\alpha$. The domain is then
arranged in a strictly increasing order while the simage is
arranged in a nondecreasing order, and by virtue of (OB1) this
gives rise to a unique order-preserving map. Two examples should
make these ideas more clear.  The path given in Figure 2 implies
$\mbox{ Dom } \alpha = \{1,3\}$ and $\mbox{Sim }\alpha = (1,1)$.
Thus the associated order-preserving map is
\[
\alpha = \left(\begin{array}{cc} 1 & 3 \\ 1 & 1 \end{array}
\right) \in {\cal P}{\cal C}_4, \] and the path given in Figure 3
implies $\mbox{Dom } \beta = \{0, 2, 3\}$ and $\mbox{Sim }\beta =
(0,2,2)$. Thus the associated order-preserving map is
\[
\beta = \left(\begin{array}{ccc} 0 & 2 & 3 \\
0 & 2 & 2 \end{array} \right)\in {\cal P}{\cal C}_4.\]

\begin{figure}[!htb]
\begin{center}
\epsfig{file=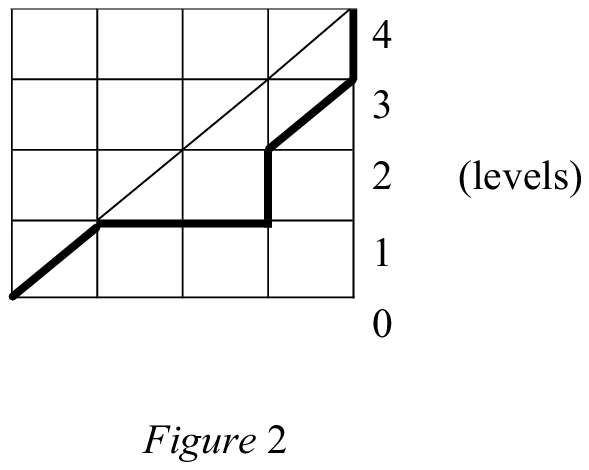}
\end{center}
\end{figure}

\begin{figure}[!htb]
\begin{center}
\epsfig{file=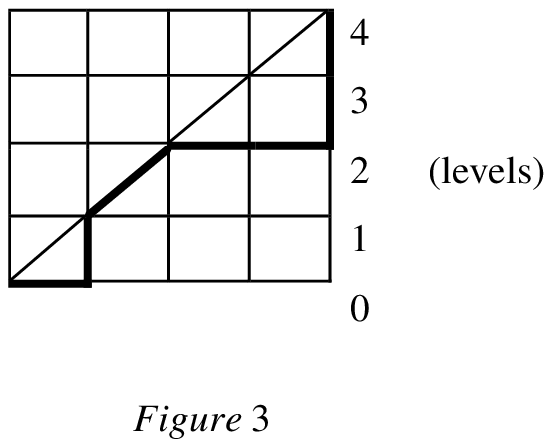}
\end{center}
\end{figure}

\noindent Moreover, the condition that type I paths never cross
above the diagonal line $y=x$, forces these associated
order-preserving maps to be decreasing as well.  In fact, at any
level, a horizontal step will take us 1-unit away from the
diagonal line $y=x$, and a vertical step will take us 1-unit
closer to the diagonal line $y=x$. Thus, since our target is the
point $(n-1,n-1)$ on this diagonal line we must take vertical
steps at higher levels to compensate for each horizontal step, and
so a decreasing map results.  Finally, note that the unique all
diagonal path corresponds to the empty map. Hence we have shown
that to every type I lattice path in an $n\times n$ square there
corresponds a unique partial order-preserving map.

Conversely, we show that every $\alpha \in{\cal P}{\cal C}_n$
corresponds to a type I path in an $n\times n$ square. For a given
$\alpha$ in ${\cal P}{\cal C}_n$ we first express it as in (3.1)
and write
\[
 \mbox{Dom } \alpha = \{a_1, a_2,\ldots, a_r\},\; \mbox{Sim }\alpha
= (b_1, b_2, \ldots, b_r) \] for some $r$ in $\{0,1,\ldots,n-1\}$.
To construct an associated type I path, first note that since, in
general, there may be repetitions in $\mbox{Sim }\alpha$, we shall
consider it as consisting of blocks of subsequences, where an
$x$-block consists only of $x$ in $\{0, 1,2,\ldots,n-1\}$,
repeated say $m$ times $(1 \leq m \leq n)$. Thus we may write
\[
\mbox{Sim } \alpha = (x_1\mbox{-block}; x_2\mbox{-block}; \ldots;
x_s\mbox{-block})
\]
where $0 \leq x_1 < x_2 < \cdots < x_s \leq n-1$. (Note that
$\mbox{Im }\alpha = \{x_1, x_2, \ldots, x_s\}.)$ Next we label the
horizontal rows of the $n \times n$ square from the bottom to the
top, starting from 0 to $n-1$. Now starting from $(0, 0),$ if $x_1
= 0$ take as many horizontal steps as the length of the
$x_1$-block, otherwise take a diagonal step to the next level
(level 1). Note that the order-decreasing property guarantees that
$b_1 = x_1 \leq a_1$, however there is no guarantee that $x_i \leq
a_i$ (for $i > 1)$, but this will not be of any disadvantage.  In
general, at level $m$\, $(0 \leq m < n-1)$, check
\begin{enumerate}
\item[(i)] if $m\in \mbox{Im }\alpha$ and $m\in \mbox{Dom
}\alpha$, take as many horizontal steps as the length of the
$m$-block followed by a vertical step to the next level;
\item[(ii)] if $m\in \mbox{Im }\alpha$ and $m \not \in \mbox{Dom
}\alpha$, take as many horizontal steps as the length of the
$m$-block followed by a diagonal step to the next level;
\item[(iii)] if $m \not \in \mbox{Im }\alpha$, take a vertical or
diagonal step to the next level depending on whether $m\in
\mbox{Dom }\alpha$ or $m\not\in \mbox{Dom }\alpha$, respectively.
\end{enumerate}

Now since in (i) and (ii) we give priority to horizontal step(s)
over vertical and diagonal steps, it follows by the
order-decreasing property that at level $m$ we must have had at
least as many horizontal steps as there are vertical steps (up to
level $m+1)$. This in turn guarantees that our paths never
overshoot to cross the diagonal line $y=x$.  Moreover the
bijection between $\mbox{Dom }\alpha$ and $\mbox{Sim }\alpha$
guarantees that our paths always end up at $(n-1,n-1)$ as
required.

An example would be quite appropriate.  Consider the map
\[
\alpha = \left(\begin{array}{ccccc}
0 & 2 & 3 & 5 & 6 \\
0 & 0 & 0 & 4 & 4 \end{array} \right)\in {\cal PC}_7.\] Then
\begin{eqnarray*}
\mbox{Dom }\alpha  =  \{0,2,3,5,6\},\,\, \mbox{Sim } \alpha & = &
(0,0,0;4,4), \quad \mbox{Im }\alpha = \{0,4\}.
\end{eqnarray*}
Now start from $(0,0)$ and take 3 horizontal steps, since the
first block of $\mbox{Sim }\alpha$ is a 0-block of length 3. We
are still at level 0, however, we take a vertical step to level 1
since $0\in \mbox{Dom }\alpha$. Now since $1\not\in (\mbox{Dom
}\alpha)\cap (\mbox{Im } \alpha)$, we take a diagonal step to
level 2.  Next, since $2\in \mbox{Dom }\alpha \cap (\mbox{Im
}\alpha)'$ we take a vertical step to level 3. Then since $3\in
\mbox{Dom }\alpha\cap(\mbox{Im }\alpha)'$ we take a vertical step
to level 4.  Now we take 2 horizontal steps and then a diagonal
step since $4\in \mbox{Im }\alpha \cap (\mbox{Dom }\alpha)'$. We
are now at level 5, from where we take a vertical step to level 6
since $5\in \mbox{Dom }\alpha \cap(\mbox{Im }\alpha)'$. Finally,
we take another vertical step to level 7, since $6\in \mbox{Dom
}\alpha \cap(\mbox{Im }\alpha)'$. Thus we have the path indicated
in Figure 4.

\begin{figure}[!htb]
\begin{center}
\epsfig{file=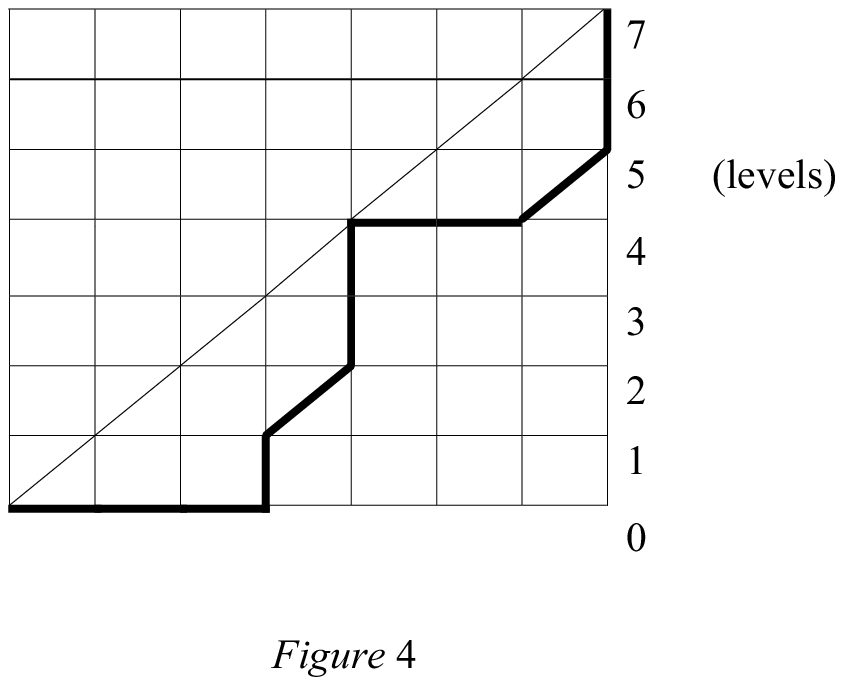}
\end{center}
\end{figure}

We have now established the main result of this section:

\begin{theorem}\label{thrm31}
There is a one-one correspondence between the set of all
decreasing and order-preserving partial transformations of an
$n$-element chain and the set of all type I lattice paths in an
$n\times n$ square.
\end{theorem}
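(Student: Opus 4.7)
The plan is to prove the theorem by constructing explicit inverse maps $\Phi$ and $\Psi$ between the set of type I lattice paths in an $n \times n$ square and ${\cal PC}_n$, following the recipes sketched in the discussion preceding the theorem. The bulk of the work lies in verifying that each construction lands in the intended target set; once that is done, the fact that the two constructions invert each other is essentially a bookkeeping check.

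First I would define $\Phi$ from type I paths to ${\cal PC}_n$ as follows: traversing the path from $(0,0)$ to $(n-1,n-1)$, for every vertical step from $(i,j)$ to $(i,j+1)$ record $j$ in a set $A$, and for every horizontal step from $(i,j)$ to $(i+1,j)$ record $j$ in a multiset $B$. By observation (OB1) one has $|A|=|B|$, so writing $A=\{a_1<\cdots <a_r\}$ and the sorted multiset $B=(b_1\leq\cdots\leq b_r)$, we obtain a candidate $\alpha$ with $\mbox{Dom }\alpha=A$ and $\mbox{Sim }\alpha = B$. Order-preservation is automatic from this sorted assignment, so the main content is showing $\alpha$ is decreasing. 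The key lemma will be: for every $k$, after the $(k+1)$-st vertical step the path has already taken at least $k+1$ horizontal steps, because otherwise the path would have crossed above the line $y=x$. The vertical step recording $a_k$ happens at height $a_k$, so there are exactly $k$ vertical steps at heights $a_1,\ldots,a_k$ (all $\leq a_k$); the $k$ horizontal steps recording $b_1,\ldots,b_k$ happen at heights $\leq a_k$ as well, so each $b_i \leq a_i \leq a_k$, giving $b_i\leq a_i$. This is exactly the decreasing condition.

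Next I would define $\Psi:{\cal PC}_n\to\{\text{type I paths}\}$ by the level-by-level procedure (i)--(iii) stated in the excerpt. The output is a sequence of horizontal, vertical, and diagonal steps, and I need to verify three things: (a) the path starts at $(0,0)$ and ends at $(n-1,n-1)$; (b) it stays weakly below the diagonal; (c) every step is one of the three allowed types. Point (a) follows because the construction produces a vertical or diagonal step out of every level $0,\ldots,n-2$ (so the final height is $n-1$) and because $|\mbox{Dom }\alpha|=|\mbox{Sim }\alpha|$ guarantees the horizontal displacement also equals $n-1$. Point (c) is automatic. Point (b) is the real obstacle: I would show by induction on the level $m$ that upon reaching level $m$ the current $x$-coordinate is at most $m$. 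This reduces to showing that whenever the construction emits horizontal steps at level $m$ (so $m\in\mbox{Im }\alpha$ is one of the $x_i$'s), the number of horizontal steps produced at levels $\leq m$ does not exceed the number of vertical/diagonal moves made at levels $< m$, and this is precisely encoded by the order-decreasing hypothesis $b_i\leq a_i$ applied to the $i$-th preimage.

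Finally I would check that $\Psi\circ\Phi$ and $\Phi\circ\Psi$ are identities. For $\Psi\circ\Phi$, the key observation is that the level-by-level reading of a type I path is uniquely determined by where its horizontal blocks sit (which levels $m$ contain horizontal steps, equivalently which $m$ lie in $\mbox{Im }\alpha$) and by whether each level contributes a vertical or diagonal step (equivalently whether $m\in\mbox{Dom }\alpha$); both data are precisely what $\Phi$ records. For $\Phi\circ\Psi$, the same correspondence shows that reading off $A$ and $B$ from $\Psi(\alpha)$ recovers $\mbox{Dom }\alpha$ and $\mbox{Sim }\alpha$ exactly. The empty map corresponds to the all-diagonal path, serving as the base case. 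The main technical obstacle throughout is the inequality $b_i\leq a_i$ and its geometric counterpart (the path staying on or below $y=x$); I expect to invoke it in both directions to pin down the proof.
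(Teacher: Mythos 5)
Your proposal follows the paper's own route: record the level of each vertical step into $\mbox{Dom }\alpha$ and of each horizontal step into $\mbox{Sim }\alpha$, and invert via the level-by-level rules (i)--(iii) with horizontal steps given priority. Your treatment of the path-to-map direction is correct, and in fact makes the needed inequality more explicit than the paper does; just note that at stage $k$ your argument yields $b_k\le a_k$ only (for $i<k$ it gives merely $b_i\le a_k$), and it is by running the argument for every $k$ that the decreasing property follows.

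There is, however, a genuine error in the map-to-path direction, exactly at the step you single out as the main obstacle. Staying weakly below the line $y=x$ means $y\le x$ at every lattice point, so what must be proved is that on reaching level $m$ the $x$-coordinate is \emph{at least} $m$, equivalently that the number of vertical steps taken so far never exceeds the number of horizontal steps taken so far. You assert the opposite inequality, twice: ``the current $x$-coordinate is at most $m$'' and ``the number of horizontal steps produced at levels $\le m$ does not exceed the number of vertical/diagonal moves made at levels $<m$.'' As stated this is false: for the paper's own example $\alpha\in{\cal PC}_7$ with $\mbox{Sim }\alpha=(0,0,0,4,4)$ the construction emits three horizontal steps at level $0$ before any upward move, and indeed any $\alpha$ with $0$ in its image violates your inequality. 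Moreover, the hypothesis $b_i\le a_i$ encodes the \emph{reverse} of what you claim: in the constructed path the $i$-th horizontal step sits at level $b_i$ and the $i$-th vertical step at level $a_i\ge b_i$, and within a level horizontal steps are emitted first, so the $i$-th horizontal step precedes the $i$-th vertical step; hence at every point the number of vertical steps is at most the number of horizontal steps, i.e. $y\le x$, as required. With the inequality turned the right way round your argument goes through and coincides with the paper's. (The off-by-one between ending at $(n,n)$ versus $(n-1,n-1)$ is inherited from the paper's own wording and is harmless.)
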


An immediate consequence of Theorem \ref{thrm31} is that it
furnishes us with the order of $|{\cal PC}_n|$. However, before we
formally state this result we first deduce from Pergola and
Sulanke \cite{Per} and Stanley \cite{Sta} that the large (or
double) Schr\"{o}der number denoted by $r_n$ could be defined as
\[
r_n = \frac{1}{n+1} \sum^n_{r=0} \left(\begin{array}{c} n+1 \\ n-r
\end{array} \right) \left(\begin{array}{c} n+r \\ r \end{array}
\right).\] Moreover, $r_n$ satisfies the recurrence
\begin{equation}
(n+2)r_{n+1} = 3(2n+1) r_n - (n-1)r_{n-1}
\end{equation}
for $n\geq 1$, with initial conditions $r_0 = 1$ and $r_1 = 2$.
The (small) Schr\"{o}der number is usually denoted by $s_n$ and
defined as $s_0 = 1, s_n = r_n/2 \, (n \geq 1)$ and so it
satisfies the same recurrence as $r_n$.

\begin{theorem}\cite[Theorem 2.10]{Lar2}\label{thrm32}
Let ${\cal PC}_n$ be as defined in (2.1). Then $|{\cal PC}_n| =
r_n$, the double Schr\"{o}der number.
\end{theorem}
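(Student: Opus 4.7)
The plan is to derive Theorem \ref{thrm32} as an immediate corollary of Theorem \ref{thrm31} combined with the classical enumeration of the lattice paths in question. By Theorem \ref{thrm31}, the map sending each $\alpha \in {\cal PC}_n$ to its associated type I path in an $n\times n$ square is a bijection, so $|{\cal PC}_n|$ equals the number of such paths.

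Next, I would invoke the known fact (recorded in Pergola and Sulanke \cite{Per} and Stanley \cite{Sta}) that the number of type I lattice paths---paths from $(0,0)$ to $(n-1,n-1)$ built from the steps $\rightarrow$, $\uparrow$, $\nearrow$ and never passing above the diagonal $y=x$---is precisely the large Schr\"{o}der number
\[
r_n = \frac{1}{n+1}\sum_{r=0}^{n}\binom{n+1}{n-r}\binom{n+r}{r}.
\]
Chaining these two facts yields $|{\cal PC}_n| = r_n$ at once.

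There is essentially no obstacle here, since all of the combinatorial work has already been absorbed into the bijection of Theorem \ref{thrm31}: Theorem \ref{thrm32} is a one-line corollary. If a self-contained proof were wanted instead, a natural alternative would be to verify the Schr\"{o}der recurrence (3.2) directly on the transformation side, by partitioning each $\alpha\in {\cal PC}_{n+1}$ according to whether the new top element $n$ lies in $\mbox{Dom}\,\alpha$, lies in $\mbox{Im}\,\alpha$, or neither, and matching the resulting counts against the three terms of the recurrence together with the initial values $r_0=1$ and $r_1=2$. In the present setup, however, the citation route is the cleanest choice and is what I would present.
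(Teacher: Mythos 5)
Your proposal is correct and matches the paper's own route exactly: the paper likewise obtains $|{\cal PC}_n| = r_n$ as an immediate consequence of the bijection in Theorem \ref{thrm31} together with the known enumeration of type I paths by the large Schr\"{o}der number from Pergola--Sulanke and Stanley. No gaps; the alternative recurrence-based argument you sketch is optional and not needed.
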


Several consequences of this result will now be exhibited. For the
semigroup $Q_n$ (defined in (2.2)) and its set complement $Q'_n$,
we now have
\begin{corollary}
$|Q_n| = |Q'_n|=s_n$, the (small) Schr\"{o}der number.
\end{corollary}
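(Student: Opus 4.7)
The plan is simply to combine the three preceding results. By definition, $Q_n$ consists of the maps in $\mathcal{PC}_n$ whose domain contains $1$, and $Q'_n$ is its set-theoretic complement in $\mathcal{PC}_n$. Hence we have a disjoint decomposition
\[
\mathcal{PC}_n = Q_n \sqcup Q'_n,
\]
from which $|\mathcal{PC}_n| = |Q_n| + |Q'_n|$ is immediate.

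Next, I would invoke Lemma 2.2, which furnishes a bijection $\phi: Q_n \to Q'_n$ and hence the equality $|Q_n| = |Q'_n|$. Substituting into the cardinality identity above gives $2|Q_n| = |\mathcal{PC}_n|$.

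Finally, I would apply Theorem \ref{thrm32}, which says $|\mathcal{PC}_n| = r_n$. Combined with the previous step this yields $|Q_n| = |Q'_n| = r_n/2$, and by the definition of the small Schr\"oder number recalled just before the corollary ($s_n = r_n/2$ for $n \geq 1$), this is precisely $s_n$. There is essentially no obstacle here: the hard work has been absorbed into the bijection of Lemma 2.2 (which explains why $\mathcal{PC}_n$ splits evenly in two) and the lattice-path computation of Theorem \ref{thrm32} (which delivers the total count $r_n$); the corollary is just the arithmetic consequence. The only minor point worth noting is that the identity $s_n = r_n/2$ requires $n \geq 1$, which is harmless since $Q_n$ presupposes that $1 \in X_n$.
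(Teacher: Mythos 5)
Your argument is correct and is exactly the reasoning the paper intends: the corollary follows immediately from the disjoint decomposition ${\cal PC}_n = Q_n \cup Q'_n$, Lemma 2.2 ($|Q_n|=|Q'_n|$), Theorem \ref{thrm32} ($|{\cal PC}_n|=r_n$), and the definition $s_n = r_n/2$ for $n\geq 1$. No gaps; your note about $n\geq 1$ is a fair (if minor) point of care.
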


It is also known (see for example, Pergola and Sulanke\cite{Per})
that the number of type I lattice paths (in an $n\times n$ square)
without a diagonal step
is the $n$-$th$ Catalan number: $\left(\begin{array}{c} 2n\\
n\end{array}\right)/(n+1)$. However, since clearly such paths
correspond to full transformations, Higgins \cite[Theorem
3.1]{Hig2} follows immediately:

\begin{theorem}\label{thrm34}
Let \,${\cal C}_n$ be the semigroup of all decreasing and
order-preserving full transformations of $X_n$. Then $|{\cal
C}_n|= \left(\begin{array}{c} 2n\\ n\end{array} \right)/(n+1)$,
the $n$-$th$ Catalan number.
\end{theorem}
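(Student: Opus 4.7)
The plan is to specialize the bijection of Theorem \ref{thrm31} to full transformations and identify the corresponding subclass of lattice paths. Since $\mathcal{C}_n \subseteq \mathcal{PC}_n$ consists precisely of those $\alpha \in \mathcal{PC}_n$ with $\mbox{Dom }\alpha = X_n$, the restriction of the bijection of Theorem \ref{thrm31} to $\mathcal{C}_n$ will carry $\mathcal{C}_n$ onto some set of type I paths, and I need to show this image is exactly the set of type I paths using no diagonal step.

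The key observation is obtained by re-reading the three-case construction of the path from $\alpha$. At level $m$, cases (i) and the first alternative of (iii) produce a vertical step (each occurring when $m\in \mbox{Dom }\alpha$), while case (ii) and the second alternative of (iii) produce a diagonal step (each occurring when $m\notin \mbox{Dom }\alpha$). Hence a diagonal step appears in the path associated with $\alpha$ if and only if some level $m\in X_n$ is missing from $\mbox{Dom }\alpha$. Equivalently, the path has no diagonal step if and only if $\mbox{Dom }\alpha = X_n$, i.e.\ $\alpha \in \mathcal{C}_n$. The reverse direction is automatic: given any type I path with no diagonal step, at every level one records a vertical transition, so the reconstructed partial transformation is defined on all of $X_n$.

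It therefore suffices to count the number of type I lattice paths in the $n\times n$ square that use no diagonal step. These are precisely the lattice paths from $(0,0)$ to $(n,n)$ using only unit east and north steps and staying weakly below the line $y=x$, whose number is the well-known $n$-th Catalan number $\binom{2n}{n}/(n+1)$, as recorded by Pergola and Sulanke \cite{Per}. This yields $|\mathcal{C}_n|=\binom{2n}{n}/(n+1)$, as required.

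The only substantive step is the bookkeeping in the preceding paragraph: one must check that in the path-building algorithm every diagonal step really does correspond to an element missing from the domain, and conversely that every missing element forces a diagonal step at its level. Once this correspondence is verified, the theorem reduces to a standard enumeration of Dyck-type lattice paths.
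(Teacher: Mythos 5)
Your proposal is correct and takes essentially the same route as the paper: the paper also notes that under the bijection of Theorem \ref{thrm31} the diagonal-free type I paths correspond exactly to full transformations, and then cites Pergola and Sulanke \cite{Per} for the Catalan count of such paths. Your only addition is to spell out the bookkeeping (diagonal step at level $m$ if and only if $m\notin \mbox{Dom }\alpha$), which the paper treats as clear.
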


Further, we give a sequence of results which arise naturally in
the semigroup context which (with the exception of Theorem
\ref{thrm38}) may not have been asked yet in lattice path theory.
First, as in \cite{Lar2,Lar1} we define the numbers $F(n, r)$,
$G(n, k)$ and $J(n,r)$, respectively by
\begin{equation}
F(n,r) = |\{\alpha\in S: |\mbox{Dom }\alpha| = r \}|,
\end{equation}
\begin{equation}
G(n,k) = |\{\alpha\in S: \max(\mbox{Im }\alpha) = k\}|,
\end{equation}
\begin{equation}
J(n,r) = |\{\alpha\in S: |\mbox{Im }\alpha| = r \}|,
\end{equation} where $S$ is an arbitrary semigroup of
transformations. Then from (3.3) and \cite[Proposition 2.8]{Lar2}
we deduce
\begin{theorem}\label{thrm35}
Let $F(n, r)$ be as defined in (3.3), where $S={\cal PC}_n$. Then
the number of type I lattice paths (in an $n \times n$ square)
with exactly r vertical steps is
\[ F(n,r) = \frac{1}{n} \left(\begin{array}{c} n \\ r \end{array}
\right)
 \left(\begin{array}{c} n+r\\ n-1 \end{array} \right).\]
\end{theorem}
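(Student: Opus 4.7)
The plan is to transfer the counting problem from the lattice-path side to the semigroup side via the bijection of Theorem \ref{thrm31}, and then invoke the known formula from \cite[Proposition 2.8]{Lar2}. The key compatibility to record is that, under the correspondence of Theorem \ref{thrm31}, each vertical step of a type I path contributes exactly one element to $\mbox{Dom }\alpha$, and conversely each element of $\mbox{Dom }\alpha$ is produced by exactly one vertical step. In the forward direction this is immediate from the rule defining the map, which places $j$ in $\mbox{Dom }\alpha$ whenever a vertical step from $(i,j)$ to $(i,j+1)$ occurs. In the reverse direction, inspection of cases (i)--(iii) of the path-construction algorithm shows that a vertical step is issued at level $m$ precisely when $m\in\mbox{Dom }\alpha$, so the total number of vertical steps in the associated path equals $|\mbox{Dom }\alpha|$.

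With this compatibility in hand, the bijection of Theorem \ref{thrm31} restricts, for each fixed $r$, to a bijection between the set of type I lattice paths in an $n\times n$ square with exactly $r$ vertical steps and the subset $\{\alpha \in \PC : |\mbox{Dom }\alpha| = r\}$. By the definition of $F(n,r)$ in (3.3) with $S=\PC$, the cardinality of the latter set is precisely $F(n,r)$, and the closed form $\frac{1}{n}\binom{n}{r}\binom{n+r}{n-1}$ is then read off directly from \cite[Proposition 2.8]{Lar2}.

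I do not foresee a genuine obstacle: the heart of the argument, namely the formula for $F(n,r)$ on $\PC$, is already available in \cite{Lar2}, and Theorem \ref{thrm31} has already been proved above. The only step requiring any attention here is the short case-by-case verification that the reverse construction in the proof of Theorem \ref{thrm31} preserves the vertical-step count, but this follows by immediate inspection of the algorithm presented just before that theorem.
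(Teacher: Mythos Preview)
Your proposal is correct and follows exactly the approach the paper takes: the paper simply states that the result is deduced from (3.3) and \cite[Proposition 2.8]{Lar2}, implicitly using the bijection of Theorem \ref{thrm31} together with the observation that vertical steps correspond to domain elements. Your write-up just makes explicit the compatibility check that the paper leaves to the reader.
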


\noindent From (3.4) and \cite[Proposition 2.7]{Lar2} we deduce
\begin{theorem}\label{thrm36}
Let $G(n, k)$ be as defined in (3.4), where $S={\cal PC}_n$. Then
the number of type I lattice paths (in an $n \times n$ square) in
which the last horizontal segment is at level $k-1$ is $G(n,k)$,
where $G(n,0) = 1, G(n, 1) = 2^n - 1$, $G(n,n) = r_{n-1}$ and for
$2 \leq k < n$,
 \[
G(n,k) = 2G(n-1, k)-G(n-1, k-1) + G(n, k-1).\]

\end{theorem}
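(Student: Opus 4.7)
The plan is to deduce the theorem from the bijection of Theorem~\ref{thrm31} together with Proposition~2.7 of \cite{Lar2}. The central step is a refinement of that bijection: I would show that $\max(\mbox{Im }\alpha)$ equals the level of the last horizontal segment in the path associated to $\alpha$. In the explicit construction immediately preceding Theorem~\ref{thrm31}, horizontal steps at level $m$ occur only in subcases (i) and (ii), which are precisely the cases with $m \in \mbox{Im }\alpha$; conversely, every such $m$ produces at least one horizontal step at level $m$ (corresponding to the $m$-block of $\mbox{Sim }\alpha$). Hence the largest level carrying a horizontal step equals $\max(\mbox{Im }\alpha)$.

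Next I would reconcile the two conventions for the base set that appear in the paper. Section~3 uses $\{0,1,\ldots,n-1\}$ to label the path levels, while the statistic $G(n,k)$ in~(3.4) and in Proposition~2.7 of \cite{Lar2} uses $\{1,2,\ldots,n\}$; thus $\max(\mbox{Im }\alpha)=k$ in the latter convention corresponds to level $k-1$ in the former. This is the source of the shift in the theorem statement, and it identifies the paths counted by $G(n,k)$ with the type~I paths whose last horizontal segment lies at level $k-1$. The unique all-diagonal path corresponds to the empty map and accounts for $G(n,0)=1$.

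The explicit values $G(n,1)=2^n-1$, $G(n,n)=r_{n-1}$, and the recurrence for $2\le k<n$ are imported verbatim from Proposition~2.7 of \cite{Lar2}. Short semigroup arguments also corroborate the boundary cases: maps with $\mbox{Im }\alpha=\{1\}$ are in bijection with nonempty subsets of $\{1,\ldots,n\}$ (all domain elements map to $1$), giving $2^n-1$; and for $G(n,n)$, since only $n$ can map to $n$ in a decreasing map, $n$ is forced to be a fixed point while the restriction to $\{1,\ldots,n-1\}$ ranges freely over ${\cal PC}_{n-1}$, whose cardinality is $r_{n-1}$ by Theorem~\ref{thrm32}. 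The main (and essentially only) obstacle is the index-shift bookkeeping between the two base-set conventions; once that is pinned down, the statement is an automatic translation of the semigroup result through the bijection.
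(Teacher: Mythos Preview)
Your proposal is correct and follows essentially the same approach as the paper: the paper simply states ``From (3.4) and \cite[Proposition 2.7]{Lar2} we deduce'' before quoting the theorem, relying on the bijection of Theorem~\ref{thrm31} to translate the semigroup statistic $\max(\mbox{Im }\alpha)$ into the lattice-path statistic. You have spelled out the details that the paper leaves implicit (the identification of horizontal levels with image elements and the $\{0,\ldots,n-1\}$ versus $\{1,\ldots,n\}$ shift), but the route is the same.
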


\noindent From (3.4) and \cite[Proposition 3.10]{Lar1} we deduce

\begin{theorem}\label{thrm37}
Let $G(n, k)$ be as defined in (3.4), where $S={\cal C}_n$. Then
the number of type I lattice paths without diagonal steps (in an
$n \times n$ square) in which the last horizontal segment is at
level $k-1$ is $G(n,k)$, where for $1 \leq k \leq n$,
 \[ G(n,k) = \frac{n-k+1}{n} \left(\begin{array}{c} {n+k-2} \\ {n-1} \end{array}
\right).\]
\end{theorem}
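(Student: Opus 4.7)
The plan is to derive the theorem by specializing the bijection of Theorem \ref{thrm31} to full transformations and then invoking the closed form for $G(n,k)$ on ${\cal C}_n$ established in \cite{Lar1}.

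First I would observe that, in the construction preceding Theorem \ref{thrm31}, a diagonal step is inserted at level $m$ precisely when $m \notin \mbox{Dom }\alpha$ (cases (ii) and (iii)); consequently the bijection restricts to one between ${\cal C}_n$ (the elements of ${\cal PC}_n$ with full domain) and the type I paths in an $n \times n$ square that use no diagonal step. This is already implicit in the remark made just above Theorem \ref{thrm34}. Next I would translate the condition $\max(\mbox{Im }\alpha)=k$ into a statement about the corresponding path. A horizontal step from $(i,j)$ to $(i+1,j)$ contributes $j$ to $\mbox{Sim }\alpha$, and since every admissible step is $\rightarrow$, $\uparrow$, or $\nearrow$ the $y$-coordinate along the path is non-decreasing; therefore the last (in path order) horizontal segment sits at the greatest $y$-level at which any horizontal step occurs. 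That greatest level equals $\max \mbox{Sim }\alpha = \max \mbox{Im }\alpha$ under the $\{0,1,\ldots,n-1\}$ encoding of $X_n$ used in Section 3, which is $k-1$ precisely when $\max\mbox{Im }\alpha = k$ under the $\{1,\ldots,n\}$ encoding used to define $G(n,k)$ in (3.4). Hence the paths in the statement are in bijection with $\{\alpha \in {\cal C}_n : \max\mbox{Im }\alpha = k\}$, whose cardinality is $G(n,k)$ on ${\cal C}_n$.

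Finally, I would quote Proposition 3.10 of \cite{Lar1}, which supplies the closed form $G(n,k) = \frac{n-k+1}{n}\binom{n+k-2}{n-1}$ for ${\cal C}_n$, to conclude. No real obstacle is expected: the argument is essentially an application of the bijection already built in this section, combined with a citation. The only mildly delicate point is keeping the two indexings of $X_n$ aligned when matching $\max\mbox{Im }\alpha$ with the level of the last horizontal segment, which accounts for the shift from $k$ to $k-1$ in the statement.
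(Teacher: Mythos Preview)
Your proposal is correct and follows exactly the paper's approach: the paper simply states that the theorem is deduced from (3.4) together with \cite[Proposition 3.10]{Lar1}, relying implicitly on the restriction of the bijection in Theorem~\ref{thrm31} to full maps and on the translation of $\max(\mbox{Im }\alpha)=k$ into the level of the last horizontal segment. You have merely spelled out those implicit steps in more detail, including the index shift from $k$ to $k-1$; there is no substantive difference.
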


From (3.5) and \cite[Proposition 3.6]{Lar1} we deduce

\begin{theorem}\label{thrm38}
Let $J(n, r)$ be as defined in (3.5), where $S={\cal C}_n$. Then
the number of type I lattice paths without diagonal steps (in an
$n \times n$ square) with exactly $r$ horizontal segments is
$J(n,r)$, where for $1 \leq r \leq n$,
 \[ J(n,r) = \frac{1}{n-r+1} \left(\begin{array}{c} n \\ r \end{array}
\right)
 \left(\begin{array}{c} n-1\\ r-1 \end{array} \right).\]

\end{theorem}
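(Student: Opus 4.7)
The plan is to invoke the bijection of Theorem~\ref{thrm31} specialized to $\mathcal{C}_n$ (as in Theorem~\ref{thrm34}) and then translate the path statistic ``number of horizontal segments'' into the transformation statistic $|\mathrm{Im}\,\alpha|$; the closed form will follow directly from the definition (3.5) together with the cited [Lar1, Proposition 3.6].

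First I would recall that Theorem~\ref{thrm34} identifies the type I paths in an $n\times n$ square that use no diagonal step with the elements of $\mathcal{C}_n$ under the bijection built in Section~3. Under that construction every horizontal step from $(i,k-1)$ to $(i+1,k-1)$ contributes an entry equal to $k-1$ to $\mathrm{Sim}\,\alpha$. A \emph{horizontal segment} is a maximal run of consecutive $\rightarrow$ steps, and since two distinct horizontal segments must be separated by at least one $\uparrow$ step, they necessarily occur at different heights. Hence the number of horizontal segments in the path equals the number of distinct heights at which a horizontal step is taken, which is exactly the number of distinct values appearing in $\mathrm{Sim}\,\alpha$, that is, $|\mathrm{Im}\,\alpha|$.

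With this translation, counting type I paths in the $n\times n$ square that avoid diagonal steps and have exactly $r$ horizontal segments is the same as counting those $\alpha\in\mathcal{C}_n$ with $|\mathrm{Im}\,\alpha|=r$, and the latter is $J(n,r)$ by (3.5). The explicit formula $J(n,r)=\tfrac{1}{n-r+1}\binom{n}{r}\binom{n-1}{r-1}$ is then immediate from [Lar1, Proposition~3.6].

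There is really no hard step here: the identification of horizontal segments with distinct image values is a direct consequence of how the bijection of Section~3 was built, and no calculation is needed since the closed form for $J(n,r)$ on $\mathcal{C}_n$ is supplied by the cited proposition. The only thing to be careful about is distinguishing a horizontal \emph{segment} from a horizontal \emph{step}, so that the count is of distinct levels (equivalently, distinct image values) rather than of total horizontal steps (which would correspond to $|\mathrm{Dom}\,\alpha|$ and yield $F(n,r)$ instead).
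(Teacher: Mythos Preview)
Your proposal is correct and follows essentially the same approach as the paper: the paper's entire argument is the single line ``From (3.5) and \cite[Proposition 3.6]{Lar1} we deduce,'' relying implicitly on the bijection of Theorem~\ref{thrm31} (specialized to $\mathcal{C}_n$ as in Theorem~\ref{thrm34}) to match horizontal segments with distinct image values. You have merely made explicit the one observation the paper leaves tacit, namely that distinct horizontal segments in a diagonal-free path occur at distinct heights and hence correspond to the distinct elements of $\mbox{Im }\alpha$.
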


Define a subsemigroup of ${\cal PO}_n$ as
\begin{equation}
DL_n = \{\alpha\in {\cal PO}_n: \mbox{Dom }\alpha \subseteq
X_{n-1}= \{0,1,\cdots,n-2\}\}.
\end{equation}
Now if we consider the (unrestricted) lattice paths in the sense
that our lattice paths consist of the same steps as the type I
lattice paths but they can now cross the diagonal line many times
if necessary then the total number of such paths is known to be
the central Delannoy number, $D(n,n)$ \cite{Com}, where the
(arbitrary) Delannoy numbers $D(n,k)$ are given by
\begin{equation}
D(n,k) = \sum^n_{r=0} \left(\begin{array}{c} k \\ r \end{array}
\right)
 \left(\begin{array}{c} n+k-r\\ r \end{array} \right),
\end{equation}
and hence
\begin{equation}
D(n,n) = \sum^n_{r=0} \left(\begin{array}{c} n \\ r \end{array}
\right)
 \left(\begin{array}{c} n+r\\ r \end{array} \right) =\,
 _2F_1(-n;n+1;1;-1),
\end{equation}
where $_2F_1(a,b;c;z)$ is a hypergeometric function. Moreover,
$D(n,n)$ satisfies the recurrence
\begin{equation*}
D(n,n) = D(n-1,n)+D(n,n-1)+D(n-1,n-1).
\end{equation*}

The proof that there is a bijection between the set of
(unrestricted) lattice paths in an $n \times n$ square and the
semigroup $DL_n$ is for most part similar to the one given earlier
between the set of restricted (or type I) lattice paths and the
semigroup ${\cal PC}_n$, since $(OB1)$ applies to all lattice
paths. However, note that below the diagonal of the $n \times n$
square a horizontal step takes us 1-unit away from the diagonal
while a vertical step takes us 1-unit closer to the diagonal, but
above the diagonal the reverse is the case. In either case, the
diagonal step is neutral in the sense that it neither takes us
closer to nor away from the diagonal. Moreover, in this case we
need not worry about our lattice paths crossing the diagonal of
the $n \times n$ square, as our paths can now cross over several
times if necessary. As in the above we exemplify how to construct
a lattice path from a partial transformation, and vice-versa. The
path in Figure 5 implies $\mbox{Dom } \beta = \{0,1,3\}$ and
$\mbox{Sim } \beta = \{0;4,4\}$. Thus the associated partial
order-preserving map is
\[
\beta = \left(\begin{array}{ccc} 0 & 1 & 3 \\
0 & 4 & 4 \end{array} \right)\in DL_4.\]

\noindent Similarly, consider the partial order-preserving map

\[
\gamma = \left(\begin{array}{ccccc} 0 & 2 & 3 & 5 & 6 \\
0 & 3 & 3 & 4 & 7 \end{array} \right) \in DL_7.\]

\noindent It is now routine to verify that it corresponds to the
path given in Figure 6.

\begin{figure}[!htb]
\begin{center}
\epsfig{file=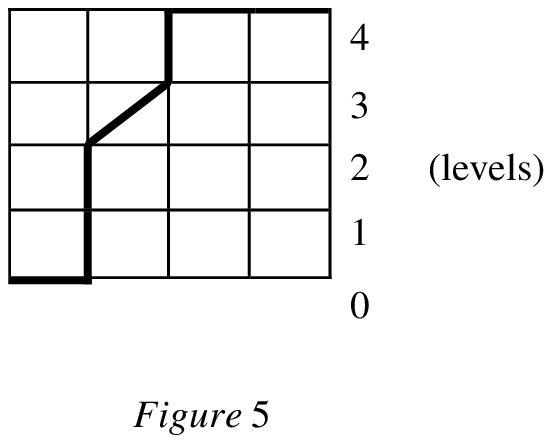}
\end{center}
\end{figure}
\begin{figure}[!htb]
\begin{center}
\epsfig{file=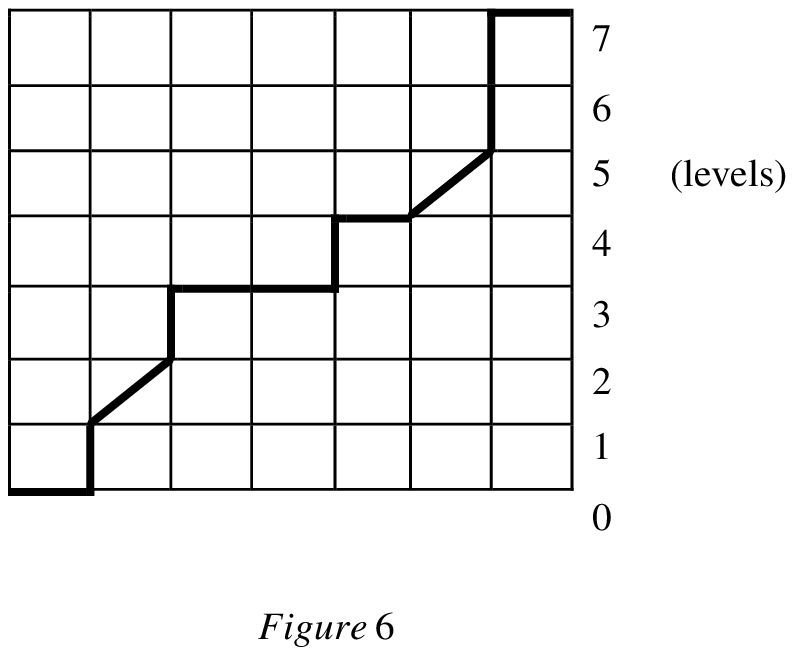}
\end{center}
\end{figure}

\noindent Thus we have established

\begin{theorem}
There is a one-one correspondence between the set of all
order-preserving partial transformations in $DL_n$ and the set of
all (unrestricted) lattice paths in an $n\times n$ square.
\end{theorem}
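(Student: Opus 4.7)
The plan is to mirror the bijective argument used for Theorem \ref{thrm31}, adapting it to the two differences between $\mathcal{PC}_n$ and $DL_n$: we drop the order-decreasing requirement (so the path may cross $y=x$ freely) and we instead impose that $n-1\notin \mbox{Dom }\alpha$. I would define the forward map $\Phi$ from unrestricted lattice paths in the $n\times n$ square (using the same three step types) into $DL_n$ by the same recipe as in Section 3: a vertical step from $(i,j)$ to $(i,j+1)$ contributes $j$ to $\mbox{Dom }\alpha$; a horizontal step from $(i,j)$ to $(i+1,j)$ contributes $j$ to $\mbox{Sim }\alpha$; a diagonal step contributes to neither.

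Well-definedness of $\Phi$ into $DL_n$ would then be checked in four quick steps. First, observation (OB1) applies to any such path, so $|\mbox{Dom }\alpha|=|\mbox{Sim }\alpha|$. Second, because the path never descends, no two vertical steps can share a starting height, so $\mbox{Dom }\alpha$ is genuinely a set. Third, each vertical step ends at a height of at most $n-1$ and therefore starts at a height of at most $n-2$, giving $\mbox{Dom }\alpha\subseteq X_{n-1}$, which is the defining constraint of $DL_n$. Fourth, once the path leaves a given height it never returns, so the heights recorded by successive horizontal steps form a nondecreasing sequence, which together with the strict increase of $\mbox{Dom }\alpha$ makes the associated map order-preserving.

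For the inverse $\Psi$, I would apply verbatim the case analysis (i)--(iii) from the proof of Theorem \ref{thrm31}: given $\alpha\in DL_n$, decompose $\mbox{Sim }\alpha$ into its maximal $m$-blocks and, at each level $m\in\{0,\ldots,n-2\}$, take the $m$-block of horizontal steps followed by a vertical step if $m\in\mbox{Dom }\alpha$ or a diagonal step if not; any $(n-1)$-block of horizontal steps is appended last, after reaching the top level. The assumption $n-1\notin \mbox{Dom }\alpha$ (automatic in $DL_n$) is precisely what rules out an illegal upward step beyond the top of the square. A quick count with $r=|\mbox{Dom }\alpha|$ --- producing $r$ horizontal, $r$ vertical, and $n-1-r$ diagonal steps --- confirms that the resulting path terminates at $(n-1,n-1)$.

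The only mild obstacle is that the earlier argument leaned on order-decreasingness to keep the path weakly below $y=x$; here that safeguard is absent, but it is also no longer required, since unrestricted paths may cross the diagonal arbitrarily often and diagonal steps remain neutral with respect to $y=x$. After this observation, verifying that $\Phi$ and $\Psi$ are mutual inverses reduces to the same block-by-block inspection used for $\mathcal{PC}_n$, which completes the proof.
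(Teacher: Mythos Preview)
Your proposal is correct and follows exactly the paper's own argument: the paper simply asserts that the bijection of Theorem~\ref{thrm31} carries over to the unrestricted setting because (OB1) holds for all such paths and the ``stay below the diagonal'' safeguard is no longer needed, and you have spelled this out with more care than the paper itself does. The only wrinkle is an indexing ambiguity already present in the paper---your endpoint $(n-1,n-1)$ is the one forced by definition~(3.6) of $DL_n$, whereas the paper's worked examples (and the subsequent claim $|DL_n|=D(n,n)$) implicitly shift by one---but this does not affect the substance of your bijection.
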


\noindent Immediately we deduce

\begin{theorem}
Let $DL_n$ be as defined in (3.6). Then $|DL_n| = D(n,n)$, the
n-th central Delannoy number.
\end{theorem}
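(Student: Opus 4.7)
The plan is to chain the bijection from the previous theorem (Theorem 3.9) with the classical enumeration of unrestricted lattice paths. The proof will be essentially a one-line deduction, so this proposal focuses on what needs to be cited and what, if anything, must be verified in passing.

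First, I would invoke Theorem 3.9, which provides a bijection between $DL_n$ and the set, call it ${\cal L}_n$, of all unrestricted lattice paths in the $n \times n$ square built from the steps $\rightarrow$, $\uparrow$, $\nearrow$. This immediately reduces the claim to the identity $|{\cal L}_n| = D(n,n)$.

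Second, I would identify $|{\cal L}_n|$ with $D(n,n)$ as defined in (3.8), citing the standard reference \cite{Com} already invoked by the authors in the paragraph introducing (3.7). For readers who prefer a direct argument, one can sketch the count as follows: by (OB1), a path with exactly $r$ diagonal steps uses the same number of horizontal as vertical steps (namely, $n-r$ of each if one takes the endpoint to be $(n,n)$), so the number of such paths for fixed $r$ is a multinomial coefficient $\binom{2n-r}{r,\,n-r,\,n-r}$, and summing over $r$ rewrites, after a routine binomial manipulation, into the form $\sum_{r=0}^{n}\binom{n}{r}\binom{n+r}{r}$ displayed in (3.8).

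I foresee no real obstacle: the substantive content is the bijection of Theorem 3.9, and what remains is a classical path-counting identity. The only point worth flagging is that (OB1) continues to apply to unrestricted paths—its proof used only the commutativity of the step operators $h$, $v$, $d$, not any constraint relative to the diagonal $y=x$—so the reduction to a sum of multinomial coefficients is legitimate, and the identification with $D(n,n)$ then follows from the closed form already recorded in (3.8).
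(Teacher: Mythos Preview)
Your proposal is correct and matches the paper's approach exactly: the paper states the theorem with the preamble ``Immediately we deduce'' right after Theorem~3.9, relying on the bijection just established together with the identification (already recorded in the paragraph around (3.7)--(3.8), with the citation to \cite{Com}) of the number of unrestricted lattice paths as $D(n,n)$. Your optional multinomial sketch is extra detail the paper omits, but it is sound and does not change the route.
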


\noindent From \cite[Proposition 2.11]{Lar3} we deduce

\begin{theorem}
The total number of lattice paths (in an $n \times n$ square)
whose last step is never a horizontal step is $|{\cal
PO}_{n-1}|=c_{n-1}$, where $c_0 = 1, c_1 =2$ and for all $n \geq
1$
\[
(2n-1)(n+1)c_{n+1} = 4(3n^2-1)c_n-(2n+1)(n-1)c_{n-1}.\]

\end{theorem}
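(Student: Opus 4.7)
The plan is to restrict the bijection of Theorem 3.9 between unrestricted lattice paths in the $n \times n$ square and the semigroup $DL_n$ to those paths whose last step is not horizontal, and to identify the resulting subset with ${\cal PO}_{n-1}$. Under that bijection, recall that a horizontal step at level $j$ contributes $j$ to $\mbox{Sim } \alpha$, a vertical step leaving level $j$ places $j$ into $\mbox{Dom } \alpha$, and a diagonal step contributes to neither.

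The crucial observation is geometric: because the path terminates at $(n-1,n-1)$ and neither a vertical nor a diagonal step can originate at the top row without exiting the grid, any horizontal step at level $n-1$ must lie on the terminal horizontal segment of the path; in particular, the final step of the path is then itself horizontal. Conversely, a horizontal final step is automatically a horizontal step at level $n-1$. Combining this with the translation recalled above yields the key equivalence
\[
\mbox{the last step is horizontal} \iff n-1 \in \mbox{Im } \alpha.
\]

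Hence paths whose last step is not horizontal correspond bijectively under Theorem 3.9 to those $\alpha \in DL_n$ for which $n-1 \notin \mbox{Im } \alpha$. Since $\alpha \in DL_n$ already forces $\mbox{Dom } \alpha \subseteq \{0,1,\ldots,n-2\}$, supplementing this with $\mbox{Im } \alpha \subseteq \{0,1,\ldots,n-2\}$ together with the order-preserving property pins down precisely the order-preserving partial transformations of the $(n-1)$-chain $\{0,1,\ldots,n-2\}$, i.e.\ the elements of ${\cal PO}_{n-1}$. The evaluation $|{\cal PO}_{n-1}| = c_{n-1}$ together with the three-term recurrence $(2n-1)(n+1)c_{n+1} = 4(3n^2-1)c_n-(2n+1)(n-1)c_{n-1}$ and the initial conditions $c_0=1$, $c_1=2$ is then imported verbatim from \cite[Proposition 2.11]{Lar3}.

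The only place where any care is needed is the geometric equivalence displayed above; once it is in place, the proof amounts to reading off, from the bijection of Theorem 3.9, which conditions on $\alpha$ the restriction ``non-horizontal last step'' imposes, and then quoting the cited proposition for the closed-form count and recurrence.
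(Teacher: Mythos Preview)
Your argument is correct and is precisely the approach the paper takes: the paper offers no proof beyond ``From \cite[Proposition 2.11]{Lar3} we deduce,'' relying implicitly on the bijection of Theorem~3.9, and you have simply made explicit the connecting observation that under this bijection a non-horizontal last step corresponds to the top element being absent from $\mbox{Im }\alpha$, whence (since it is already absent from $\mbox{Dom }\alpha$ by the definition of $DL_n$) the map lies in ${\cal PO}_{n-1}$.
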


\noindent From (3.3) and \cite[Proposition 2.7]{Lar3} we deduce

\begin{theorem}\label{thrm312}
Let $F(n, r)$ be as defined in (3.3), where $S={\cal PO}_n$. Then
the number of lattice paths (in an $(n+1) \times (n+1)$ square)
whose last step is not a horizontal step and with exactly r
vertical steps is
\[ F(n,r) =  \left(\begin{array}{c} n \\ r \end{array}
\right)
 \left(\begin{array}{c} n+r-1\\ n-1 \end{array} \right).\]
\end{theorem}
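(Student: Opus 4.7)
The plan is to chain the bijection furnished by Theorem \ref{thrm32}... actually, by Theorem 3.11 (suitably reindexed) with the enumeration of ${\cal PO}_n$ by domain size established in [Lar3, Proposition 2.7].

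First, I reindex Theorem 3.11 by replacing $n$ with $n+1$: the lattice paths in an $(n+1) \times (n+1)$ square whose last step is not horizontal are in one-to-one correspondence with the elements of ${\cal PO}_n$. I then inspect the underlying bijection, which is built in essentially the same way as the one for $DL_{n+1}$ described just before Theorem 3.9, and which in turn parallels the type~I bijection preceding Theorem 3.3. Under this correspondence, a vertical step beginning at height $j$ places $j$ in $\mbox{Dom }\alpha$, while a horizontal step at height $j$ places $j$ in $\mbox{Sim }\alpha$. By observation (OB1) the two multisets have the same size, so the number of vertical steps in the path equals $|\mbox{Dom }\alpha|$.

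Consequently, the lattice paths counted in the theorem with exactly $r$ vertical steps correspond bijectively to those $\alpha \in {\cal PO}_n$ with $|\mbox{Dom }\alpha| = r$; by definition (3.3) this count is precisely $F(n, r)$ with $S = {\cal PO}_n$. Invoking [Lar3, Proposition 2.7] (or, equivalently, counting directly: $\binom{n}{r}$ choices for the $r$-subset $\mbox{Dom }\alpha$, times $\binom{n+r-1}{r}$ nondecreasing sequences of length $r$ with values in an $n$-element set), one obtains
\[
F(n,r) \;=\; \binom{n}{r}\binom{n+r-1}{r} \;=\; \binom{n}{r}\binom{n+r-1}{n-1},
\]
which is the stated formula.

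The main obstacle is verifying that the condition ``last step not horizontal'' really singles out ${\cal PO}_n$ inside $DL_{n+1}$ and not some other subset. This is the substance of Theorem 3.11, but the geometric reason is worth recording explicitly: the path terminates at the upper-right corner, so a horizontal final step must originate at the top level $n$, hence inserts $n$ into $\mbox{Sim }\alpha$; excluding such paths forces $\mbox{Im }\alpha \subseteq \{0, 1, \ldots, n-1\}$, so that $\alpha$ is effectively a partial order-preserving transformation on the $n$-element ground set, i.e.\ an element of ${\cal PO}_n$. Once this match between the geometric restriction on paths and the algebraic restriction on transformations is in place, the count of $F(n,r)$ is immediate.
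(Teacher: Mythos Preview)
Your proposal is correct and follows essentially the same approach as the paper: the paper's entire argument is the single line ``From (3.3) and \cite[Proposition 2.7]{Lar3} we deduce'', i.e., use the bijection behind Theorems~3.9 and~3.11 (reindexed by $n\mapsto n+1$) under which vertical steps record domain elements, then quote the known formula for $F(n,r)$ in ${\cal PO}_n$. You simply flesh out the details the paper leaves implicit, including the geometric reason that forbidding a terminal horizontal step cuts $DL_{n+1}$ down to ${\cal PO}_n$, and you add the optional direct count $\binom{n}{r}\binom{n+r-1}{r}$ as an alternative to citing \cite{Lar3}.
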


\noindent From (3.4) and \cite[Lemma 2.5]{Lar3} we deduce

\begin{theorem}\label{thrm313}
Let $G(n, k)$ be as defined in (3.4), where $S={\cal PO}_n$. Then
the number of lattice paths (in an $(n+1) \times (n+1)$ square)
without diagonal steps and in which last step is not a horizontal
step while the last horizontal segment is at level $k-1$ is
$G(n,k)$, where $G(n,0) = 1, G(n, 1) = 2^n - 1$, $G(n,n) =
nr_{n-1} $ and for $2 \leq k < n$,
 \[
G(n,k) = 2G(n-1, k)-G(n-1, k-1) + G(n, k-1).\]

\end{theorem}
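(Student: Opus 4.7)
The plan mirrors the deductive pattern used for Theorems \ref{thrm35}--\ref{thrm37}: combine a bijection between the relevant semigroup and a class of lattice paths with the enumerative lemma cited from \cite{Lar3}. The theorem's numerical content---$G(n,0)=1$, $G(n,1)=2^n-1$, $G(n,n)=nr_{n-1}$, and the recurrence for $2\le k<n$---is exactly the statement of Lemma 2.5 of \cite{Lar3} applied to $S={\cal PO}_n$, so once the bijective picture is in place the theorem reduces to quoting that lemma.

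First I would establish (or invoke from \cite{Lar3}) the bijection between ${\cal PO}_n$ and the set of lattice paths in an $(n+1)\times(n+1)$ square without diagonal steps whose terminal step is vertical. The construction parallels the one worked out in Section 3 for ${\cal PC}_n$: writing $\alpha$ as in (3.1), each element $a_i$ of $\mbox{Dom }\alpha$ contributes a vertical step at $y$-height $a_i$ and each element $b_i$ of $\mbox{Sim }\alpha$ contributes a horizontal step at $y$-height $b_i$, with blocks of coincident $b_i$'s giving consecutive horizontal steps. The enlarged grid and the terminal-step condition are the combinatorial adjustments that accommodate the removal of the order-decreasing constraint---essentially the same device used implicitly in Theorem \ref{thrm312}.

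Second, I would verify that under this correspondence the $y$-level of the last horizontal segment of the path equals $\max(\mbox{Im }\alpha)-1$. This is immediate from the construction, since the largest $b_i$ equals $\max(\mbox{Im }\alpha)$ and the horizontal block at that height is, by construction, the final horizontal segment before the terminal vertical step. Hence the number of admissible paths whose last horizontal segment lies at level $k-1$ equals $|\{\alpha\in{\cal PO}_n:\max(\mbox{Im }\alpha)=k\}|$, which by definition (3.4) is $G(n,k)$.

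Third, I would cite Lemma 2.5 of \cite{Lar3}, which supplies the three boundary values and the recurrence for $G(n,k)$ with $S={\cal PO}_n$, and the proof is complete. The chief obstacle is the first step: pinning down the bijection carefully and checking that the enlarged grid, the forbidding of diagonal steps, and the terminal-step-vertical condition are exactly what is needed to match ${\cal PO}_n$; once that is verified the statistic translation is routine and the enumerative lemma handles the rest.
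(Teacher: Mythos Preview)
Your approach mirrors the paper's exactly: the paper simply writes ``From (3.4) and \cite[Lemma 2.5]{Lar3} we deduce'' before stating the theorem, relying on the path--transformation correspondence already set up (via Theorem~3.9 and its consequences) together with the cited lemma for the boundary values and recurrence. Your three-step plan just makes this explicit.

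One point worth flagging, however: in Step~1 you assert a bijection between ${\cal PO}_n$ and paths in an $(n+1)\times(n+1)$ square \emph{without diagonal steps} whose terminal step is vertical. This cannot hold as stated: with no diagonal steps such a path has $n+1$ horizontal and $n+1$ vertical steps, and counting those with vertical last step gives $\binom{2n+1}{n}$, which already fails for $n=1$ (three paths versus $|{\cal PO}_1|=2$). The correct correspondence---the one implicit in Theorems~\ref{thrm312} and 3.11, to which you yourself appeal---does allow diagonal steps; the phrase ``without diagonal steps'' in the theorem statement appears to be a slip (compare the wording of Theorem~\ref{thrm312}, which omits it). With that adjustment your argument goes through and is precisely the paper's.
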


\noindent From (3.5) and \cite[Lemma 4.5]{Lar3} we deduce

\begin{theorem}\label{thrm314}
Let $J(n, r)$ be as defined in (3.5), where $S={\cal PO}_n$. Then
the number of lattice paths without diagonal steps (in an $(n+1)
\times (n+1)$ square), whose last step is not a horizontal step
and with exactly $r$ horizontal segments is $J(n,r)$, where
$J(n,0)=1=J(n,n)$, and for $1 \leq r < n$,
 \[ \l(\ba{c} n-1 \\ r-1 \ea \r)J(n,r) = \fr{2(n-r+1)}{n-r}\l(\ba{c} n \\ r-1 \ea
 \r)J(n-1,r)
  + \l(\ba{c} n \\ r \ea \r) J(n-1,r-1)\]

\end{theorem}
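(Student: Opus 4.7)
\noindent\textbf{Proof plan for Theorem \ref{thrm314}.} The strategy is to lift the recurrence already established on the semigroup side in \cite{Lar3} through the bijection developed in this paper. I would start from the bijection of Theorem 3.11, which identifies elements of $\mathcal{PO}_n$ with lattice paths in an $(n+1)\times(n+1)$ square whose last step is not horizontal. This bijection is obtained by the same block-by-block construction used earlier for $\mathcal{PC}_n$ and $DL_n$, with the step-at-level $n$ forced to be vertical in order to land at $(n+1,n+1)$ without a terminal horizontal step; in particular, observation (OB1) still applies, and every $\alpha \in \mathcal{PO}_n$ is encoded by reading off Dom$\,\alpha$ from the vertical steps and Sim$\,\alpha$ from the horizontal steps.

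The key translation is then the following. First, a diagonal step occurs at level $m$ exactly when $m\notin \mathrm{Dom}\,\alpha$ (cases (ii) and (iii) of the construction), so the condition \emph{no diagonal steps} singles out precisely the full transformations, i.e.\ the order-preserving maps with $\mathrm{Dom}\,\alpha=X_n$. Second, under the bijection a maximal run of consecutive horizontal steps at level $m$ corresponds exactly to a nontrivial $m$-block of $\mathrm{Sim}\,\alpha$, and these blocks are in one-to-one correspondence with the elements of $\mathrm{Im}\,\alpha$. Therefore the number of horizontal segments in the path equals $|\mathrm{Im}\,\alpha|$. Consequently, the number of no-diagonal paths in the $(n+1)\times(n+1)$ square with last step nonhorizontal and exactly $r$ horizontal segments equals $|\{\alpha \in S : |\mathrm{Im}\,\alpha|=r\}| = J(n,r)$, where $S$ is the subsemigroup of $\mathcal{PO}_n$ picked out by the ``no diagonal steps'' condition.

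Once this identification is in place, the asserted boundary values and the three-term recurrence are immediate transcriptions of Lemma 4.5 of \cite{Lar3}: the values $J(n,0)$ and $J(n,n)$ are the standard boundary conventions (the empty map and the identity, respectively), while the identity
\[
\binom{n-1}{r-1} J(n,r) \;=\; \frac{2(n-r+1)}{n-r}\binom{n}{r-1}J(n-1,r) \,+\, \binom{n}{r}J(n-1,r-1)
\]
is proved there purely on the semigroup side for $1\le r< n$, and the bijection transports it verbatim to the lattice-path side.

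I expect the genuinely delicate step to be the careful verification that ``horizontal segments'' (i.e.\ maximal horizontal runs) are in exact correspondence with \emph{distinct} image values, and not with multiplicities inside Sim$\,\alpha$; the bijection was set up so that each $x$-block in Sim$\,\alpha$ is output as one uninterrupted run of horizontal steps, which, once spelled out, closes the argument. Everything else is a direct citation of \cite[Lemma 4.5]{Lar3}.
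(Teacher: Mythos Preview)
Your overall strategy---translate the lattice-path description through the bijection underlying Theorem~3.11 and then invoke \cite[Lemma~4.5]{Lar3}---is exactly what the paper does; its ``proof'' is nothing more than the sentence ``From (3.5) and \cite[Lemma~4.5]{Lar3} we deduce''. So at the level of approach there is no difference.

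However, your added detail exposes a genuine inconsistency that you do not resolve. You correctly argue that, under the bijection, the clause ``without diagonal steps'' singles out the \emph{full} order-preserving maps, i.e.\ the subsemigroup $\mathcal{O}_n\subset\mathcal{PO}_n$. But you then cite \cite[Lemma~4.5]{Lar3}, which is a recurrence for $J(n,r)$ computed over \emph{all} of $\mathcal{PO}_n$, not over $\mathcal{O}_n$; and your own boundary value $J(n,0)=1$ (the empty map) already contradicts the claim that $S$ consists of full transformations. A direct check at $n=2$ confirms the mismatch: the no-diagonal paths in the relevant square with last step vertical and exactly one horizontal segment number $2$, which is $J(2,1)$ for $\mathcal{O}_2$, whereas $J(2,1)=6$ for $\mathcal{PO}_2$. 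If instead one drops the ``without diagonal steps'' restriction, the eight paths with last step non-horizontal split as $1,6,1$ by number of horizontal segments, matching $J(2,0),J(2,1),J(2,2)$ for $\mathcal{PO}_2$ exactly.

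In other words, the phrase ``without diagonal steps'' in the statement of Theorem~\ref{thrm314} (and likewise in Theorem~\ref{thrm313}) is a slip---compare Theorem~\ref{thrm312}, which correctly omits it. Once that clause is removed, your argument goes through verbatim: the bijection lands in all of $\mathcal{PO}_n$, horizontal segments correspond to elements of $\mathrm{Im}\,\alpha$, and the citation of \cite[Lemma~4.5]{Lar3} is legitimate. The gap in your write-up is not the strategy but the failure to notice that the ``no diagonal'' hypothesis is incompatible with $S=\mathcal{PO}_n$, and hence with the lemma you invoke.
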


\noindent From (3.4) and \cite[Corollary 3.11]{Lar5} we deduce

\begin{theorem}\label{thrm315}
Let $G(n, k)$ be as defined in (3.4), where $S={\cal O}_n$. Then
the number of lattice paths (in an $n \times n$ square) without
diagonal steps and in which the last horizontal segment is at
level $k-1$ is $G(n,k)$, where  for $1 \leq k \leq n$,
 \[
G(n,k) = \l( \ba{c} n +k -2 \\ k-1 \ea \r).\]

\end{theorem}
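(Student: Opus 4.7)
The plan is to deduce this from two ingredients: an appropriate specialization of the bijection developed earlier in the section, and the closed-form evaluation of $G(n,k)$ for $S = \mathcal{O}_n$ furnished by \cite[Corollary 3.11]{Lar5}. The overall strategy mirrors how Theorems \ref{thrm36} and \ref{thrm37} were obtained from Propositions in \cite{Lar2,Lar1}.

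First I would specialize the path/transformation correspondence to the class $\mathcal{O}_n$. Since every $\alpha \in \mathcal{O}_n$ is full, every level $m \in X_n$ lies in $\mbox{Dom }\alpha$, so rules (i)--(iii) of the Section 3 construction never invoke a diagonal step (diagonal steps are only needed when a level is skipped in the domain). Dropping the order-decreasing requirement correspondingly lifts the restriction that the path stay weakly below $y = x$; the argument is essentially identical to the one used to pass from $\mathcal{PC}_n$ to $DL_n$. What remains is a bijection between $\mathcal{O}_n$ and the set of lattice paths in the $n \times n$ square composed only of horizontal and vertical steps.

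Next I would check that under this bijection the statistic $\max(\mbox{Im }\alpha)$ is read off the path as ``the level of the last horizontal segment.'' This is essentially built into the construction: horizontal steps at level $m$ occur exactly when $m \in \mbox{Im }\alpha$, so the topmost level carrying a horizontal step equals $\max(\mbox{Im }\alpha)$. With the indexing of levels used here (levels $0,1,\dots,n-1$), the condition $\max(\mbox{Im }\alpha) = k$ translates to the last horizontal segment sitting at level $k-1$, which is exactly the path statistic in the theorem. Hence the number of such paths equals $G(n,k)$ with $S = \mathcal{O}_n$, in the sense of (3.4).

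Finally, I would invoke \cite[Corollary 3.11]{Lar5}, which evaluates this $G(n,k)$ as $\binom{n+k-2}{k-1}$, completing the proof. The only genuinely delicate point is verifying, carefully and once, that the rule-based construction of Section 3 does produce no diagonal steps when $\alpha$ is full and does not overshoot the boundary of the square when $\alpha$ is not required to be decreasing; everything else is bookkeeping combined with the cited formula.
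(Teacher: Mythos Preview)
Your approach is exactly the paper's: specialize the Section~3 correspondence and then quote \cite[Corollary 3.11]{Lar5} for the closed form. One assertion, however, needs repair. You claim that dropping the decreasing hypothesis yields a bijection between $\mathcal{O}_n$ and \emph{all} HV-paths in the $n\times n$ square, but this is numerically impossible: there are $\binom{2n}{n}$ such paths, whereas $|\mathcal{O}_n|=\binom{2n-1}{n-1}$. What actually happens under rules (i)--(iii) is that, since $n-1\in\mbox{Dom }\alpha$ for every full $\alpha$, the construction always terminates with a vertical step; conversely, a path with a horizontal step at height $n$ would force $n$ into $\mbox{Sim }\alpha$, which is outside the codomain $\{0,\ldots,n-1\}$. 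So the image of the bijection is only the set of HV-paths whose last step is vertical (equivalently, whose last horizontal segment lies at some level $\le n-1$).

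This does not damage the theorem, because the hypothesis ``last horizontal segment at level $k-1$'' with $1\le k\le n$ already restricts attention to precisely that set of paths, and on that set your identification $\max(\mbox{Im }\alpha)=k \Leftrightarrow$ last horizontal segment at level $k-1$ is correct. Just amend the sentence about the bijection's target accordingly; the rest of your argument (and its agreement with the paper) stands.
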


\noindent From (3.5) and \cite[Corollary 3.10]{Lar5} we deduce

\begin{theorem}\label{thrm316}
Let $J(n, r)$ be as defined in (3.5), where $S={\cal O}_n$. Then
the number of lattice paths without diagonal steps (in an $n
\times n$ square) with exactly $r$ horizontal segments is
$J(n,r)$, where for $1 \leq r \leq n$,
 \[ J(n,r) = \l(\ba{c} n \\ r \ea \r) \l(\ba{c} n-1 \\ r-1 \ea \r)\]

\end{theorem}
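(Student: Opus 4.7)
The plan mirrors the pattern of Theorems \ref{thrm312}--\ref{thrm315}: I would deduce Theorem \ref{thrm316} by transporting the enumeration of \cite[Corollary 3.10]{Lar5} across the lattice-path/transformation bijection developed earlier in this section.

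The key identifications to verify are two. First, diagonal-free paths in the $n\times n$ square correspond bijectively to elements of ${\cal O}_n$: reading off rules (i)--(iii), a diagonal step at level $m$ occurs exactly when $m\not\in\mbox{Dom }\alpha$, so forbidding diagonal steps is equivalent to insisting that $\mbox{Dom }\alpha=X_n$, i.e.\ $\alpha\in{\cal O}_n$. Second, a maximal horizontal segment at level $m$ occurs precisely when $m\in\mbox{Im }\alpha$, its length being the multiplicity of $m$ in $\mbox{Sim }\alpha$; since segments at distinct levels are distinct, the total number of horizontal segments along the path equals $|\mbox{Im }\alpha|$. Together these two observations identify the set of paths described in the statement with $\{\alpha\in{\cal O}_n : |\mbox{Im }\alpha|=r\}$, whose cardinality $\binom{n}{r}\binom{n-1}{r-1}$ is exactly the content of \cite[Corollary 3.10]{Lar5}.

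The only mild obstacle is checking that the bijection specialises cleanly in the diagonal-free, full-transformation setting: once diagonals are forbidden, every level must be traversed by exactly one vertical step, so by (OB1) the horizontal and vertical step counts coincide and the path automatically ends at the top-right corner of the grid. No additional structural input beyond the bijection already used for Theorem \ref{thrm315} is required, making the argument a direct parallel of the preceding theorems.
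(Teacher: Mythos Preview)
Your proposal is correct and follows essentially the same approach as the paper: the paper simply states ``From (3.5) and \cite[Corollary 3.10]{Lar5} we deduce'' before the theorem, so the entire argument is to transport that cited formula across the lattice-path/transformation bijection, exactly as you do. Your explicit verification that diagonal-free paths correspond to full transformations and that horizontal segments count $|\mbox{Im }\alpha|$ merely spells out what the paper leaves implicit.
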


\section{Concluding Remarks}
\setcounter{equation}{0} \setcounter{lemma}{0}
\setcounter{theorem}{0} \setcounter{corollary}{0}
\setcounter{proposition}{0} \setcounter{remark}{0}

\begin{remark} {\rm As far as we know the subsemigroups $Q_n$, $Q'_n$
and $DL_n$ have not been studied.}
\end{remark}

\begin{remark} {\rm We can now compose lattice paths via the
bijection with certain subsemigroups of ${\cal PO}_n$, thereby
making these lattice paths into algebraic objects. A more
important consequence is we believe, the link of combinatorial
questions between lattice paths and partial transformations is now
firmly established.}
\end{remark}

As stated in the introduction the number of idempotents of various
classes of semigroups of transformations has been computed.  For
further results see \cite{How3,Lar3,Umar1,Umar2}. The number of
idempotents $|E({\cal PC}_n)|$, has been shown in \cite{Lar2} to
be $(3^n+1)/2$.

\begin{remark} {\rm
It is not very difficult to see that lattice paths in which a
diagonal step never succeeds a horizontal segment, and where every
length $k$ horizontal segment is followed by exactly $k$ vertical
steps plus some (may be none) diagonal steps before another
horizontal segment, correspond to idempotents in ${\cal PO}_n$.
However, while idempotents are natural elements to study in a
semigroup, their corresponding paths do not seem to have a natural
description.}
\end{remark}

We would like to take this opportunity to give an alternative
easier proof of the formula for $|E({\cal PC}_n)|$, though quite
admittedly not via lattice paths this time. First, we consider
\begin{equation*}
e(n,r) = |\{\alpha \in {\cal PC}_n: \alpha^2 = \alpha, |\mbox{Im
}\alpha| = r \}|.
\end{equation*}
Then clearly we have
\[
e(n,0) = 1 = e(n,n),\] where the former corresponds to the empty
map and the latter corresponds to the identity map. More
generally, we have

\begin{lemma}\label{4.1} For all  $n \geq r\geq 1$, we have
\[ e(n,r) = 2e(n-1, r) + e(n-1, r-1).\]
\end{lemma}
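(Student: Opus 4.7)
The idea is to partition the idempotents $\alpha\in{\cal PC}_n$ with $|\mbox{Im }\alpha|=r$ according to how they treat the largest element of the underlying chain, which I will denote by $t$. Two basic facts drive the argument and both follow at once from $\alpha^2=\alpha$: (i) $\mbox{Im }\alpha\subseteq\mbox{Dom }\alpha$, and (ii) every element of $\mbox{Im }\alpha$ is a fixed point of $\alpha$. I would split the count into three disjoint cases and check that each matches bijectively onto one of the quantities appearing on the right-hand side.

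Case 1 is $t\notin\mbox{Dom }\alpha$: here $\alpha$ is literally an idempotent in ${\cal PC}_{n-1}$ of image size $r$, contributing $e(n-1,r)$. Case 2 is $t\in\mbox{Im }\alpha$: then by (ii) $t$ is a fixed point of $\alpha$, and deleting the pair $(t,t)$ produces an idempotent in ${\cal PC}_{n-1}$ with image of size $r-1$; appending $(t,t)$ back is the inverse operation and plainly preserves order-preservation, decreasingness and idempotence, so this contributes $e(n-1,r-1)$.

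The remaining, delicate, case is Case 3: $t\in\mbox{Dom }\alpha$ but $t\notin\mbox{Im }\alpha$. Here the restriction $\beta=\alpha|_{\mbox{Dom }\alpha\setminus\{t\}}$ sits in ${\cal PC}_{n-1}$ as an idempotent with $\mbox{Im }\beta=\mbox{Im }\alpha$, hence of size $r$. I would then show that $\alpha$ is completely forced by $\beta$: by (ii) the value $t\alpha$ must lie in $\mbox{Im }\alpha=\mbox{Im }\beta$, while order-preservation gives $t\alpha\geq y\alpha$ for every $y\in\mbox{Dom }\beta$, whence $t\alpha\geq\max(\mbox{Im }\beta)$; combining the two yields $t\alpha=\max(\mbox{Im }\beta)$. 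This maximum exists precisely because $r\geq 1$, which is exactly the hypothesis of the lemma, and $\max(\mbox{Im }\beta)<t$ since $t\notin\mbox{Im }\alpha$, so the extension is order-decreasing. Conversely, any idempotent $\beta\in{\cal PC}_{n-1}$ of image size $r$ extends uniquely to such an $\alpha$ by the rule $t\mapsto\max(\mbox{Im }\beta)$; this adds another $e(n-1,r)$ to the total.

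Summing the three contributions yields $e(n,r)=2e(n-1,r)+e(n-1,r-1)$. The main obstacle is Case 3, where the tight interaction of idempotence, order-preservation and order-decreasingness must be used to pin down the single available value for $t\alpha$; this step is also where the hypothesis $r\geq 1$ becomes essential, since otherwise $\mbox{Im }\beta$ is empty and no such extension exists, explaining the anomalous value $e(n,0)=1$.
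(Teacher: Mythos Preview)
Your proof is correct and follows essentially the same three-case decomposition as the paper, partitioning idempotents according to whether the top element lies outside the domain, is a fixed point, or lies in the domain but not the image. Your treatment of Case~3 is in fact more explicit than the paper's: where the paper simply asserts that there are $e(n-1,r)$ such idempotents, you supply the bijection by showing that $t\alpha$ is forced to equal $\max(\mbox{Im }\beta)$.
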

\begin{proof}{}
If $n\not\in \mbox{Dom }\alpha$ then $n\not\in \mbox{Im } \alpha$,
by idempotency and so there are $e(n-1, r)$ idempotents of this
type.  If on the other hand $n \in \mbox{Dom } \alpha$ then either
$n\alpha = k\alpha < n$, for some $k \in \{r,r+1, \cdots, n-1\}$,
of which there are again $e(n-1, r)$ idempotents of this type; or
$n\alpha = n$, of which there are $e(n-1,r-1)$ idempotents of this
type, by the order-decreasing property. Hence the result follows.
\end{proof}

Now let $e_n = \sum^n_{r=0} e(n,r)$. Then $e_0 = 1$, and the next
lemma gives a recurrence relation satisfied by $e_n$.

\begin{lemma} For all  $n \geq 1$, we have:
$e_n = 3e_{n-1}-1.$
\end{lemma}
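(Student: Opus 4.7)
The plan is to derive the recurrence by summing the identity of Lemma \ref{4.1} over all values of $r$ and then carefully handling the boundary terms. Specifically, I would start from
\[
e_n = \sum_{r=0}^{n} e(n,r) = e(n,0) + \sum_{r=1}^{n} e(n,r)
\]
and substitute $e(n,r) = 2e(n-1,r) + e(n-1,r-1)$ (valid for $1 \leq r \leq n$) into the right-hand sum, together with $e(n,0) = 1$. This yields
\[
e_n = 1 + 2\sum_{r=1}^{n} e(n-1,r) + \sum_{r=1}^{n} e(n-1,r-1).
\]

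Next I would re-express each sum in terms of $e_{n-1}$. The second sum is immediate after shifting the index: $\sum_{r=1}^{n} e(n-1,r-1) = \sum_{r=0}^{n-1} e(n-1,r) = e_{n-1}$. For the first sum, the key observation is that for $\alpha \in \mathcal{PC}_{n-1}$ the image has size at most $n-1$, so $e(n-1,n) = 0$; therefore $\sum_{r=1}^{n} e(n-1,r) = \sum_{r=1}^{n-1} e(n-1,r) = e_{n-1} - e(n-1,0) = e_{n-1} - 1$. Plugging these back in gives $e_n = 1 + 2(e_{n-1} - 1) + e_{n-1} = 3e_{n-1} - 1$, as claimed.

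The only subtle point, and thus the main (mild) obstacle, is the boundary bookkeeping: making sure to peel off $e(n,0) = 1$ before invoking Lemma \ref{4.1}, and noticing that the term $e(n-1,n)$ silently vanishes so that the shifted and unshifted sums each reduce cleanly to $e_{n-1}$ up to a single correction of $1$. Once these are handled correctly, the recurrence is an essentially one-line consequence of Lemma \ref{4.1}.
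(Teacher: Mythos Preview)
Your proof is correct and follows essentially the same approach as the paper: both sum the recurrence of Lemma~\ref{4.1} over $r$ and track the boundary terms to obtain $3e_{n-1}-1$. Your handling of the boundaries (peeling off $e(n,0)=1$ and explicitly noting $e(n-1,n)=0$) is in fact slightly cleaner than the paper's version, which achieves the same effect by writing $e(n,0)=2e(n-1,0)-1$ before regrouping.
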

\begin{proof}{} By using Lemma \ref{4.1}, we have
\begin{eqnarray*}
e_n & = & \sum^n_{r=0} e(n,r)
 = e(n,0)+e(n,1)+e(n,2)+ e(n,3)+ ... +e(n-1,n-1)+e(n,n)\\
&=&  [2e(n-1,0) - 1]+[2e(n-1,1) +e(n-1,0)]+[2e(n-1,2) +e(n-1,1)] \\
 && + [2e(n-1,3) +e(n-1,2)]+ ... +[2e(n-1,n) +e(n-1,n-1)] \\
& = & 3e(n-1,0)+3e(n-1,1)+3e(n-1,2)+ ... +3e(n-1,n-1)-1 \\
 & = & 3\sum^{n-1}_{r=0}e(n-1,r) -1 =3e_{n-1}-1.
 \end{eqnarray*}
 \end{proof}

 We now have \cite[Proposition 3.5]{Lar2}.

\begin{theorem} Let ${\cal P}{\cal C}_n$ be as defined in (2.1).
Then $|E({\cal P}{\cal C}_n)| = e_n = \frac{1}{2}(3^n + 1)$.
\end{theorem}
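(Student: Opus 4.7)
The final theorem reduces, given the previous lemma, to solving the first-order linear recurrence $e_n = 3e_{n-1} - 1$ with initial condition $e_0 = 1$, and showing the closed form is $\frac{1}{2}(3^n+1)$. So the plan is entirely a routine verification; no further semigroup-theoretic input is needed.

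My approach would be a direct induction on $n$. For the base case, observe $e_0 = 1 = \frac{1}{2}(3^0+1)$, consistent with the unique idempotent in $\mathcal{PC}_0$ (the empty map). For the inductive step, assume $e_{n-1} = \frac{1}{2}(3^{n-1}+1)$. Then applying the recurrence from the preceding lemma,
\[
e_n = 3e_{n-1} - 1 = 3 \cdot \frac{3^{n-1}+1}{2} - 1 = \frac{3^n + 3 - 2}{2} = \frac{3^n+1}{2},
\]
which completes the induction.

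As a sanity check (and an alternative presentation if one prefers not to invoke induction), one can solve the recurrence by standard means: the homogeneous solution is $A \cdot 3^n$, a particular constant solution to $e_n = 3e_{n-1} - 1$ is $\tfrac{1}{2}$, so the general solution is $e_n = A \cdot 3^n + \tfrac{1}{2}$. Imposing $e_0 = 1$ gives $A = \tfrac{1}{2}$, recovering the claimed formula. Either route works and there is no real obstacle; the only mild thing to watch is getting the arithmetic right when simplifying $\tfrac{3(3^{n-1}+1)}{2} - 1$, which is where a sign or off-by-one slip could creep in. Given that the recurrence has already been derived in the previous lemma, the proof of the theorem itself is essentially one line.
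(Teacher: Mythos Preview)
Your proof is correct and matches the paper's approach exactly: the paper simply says ``By the standard method of solving linear recurrence relations'' and cites a textbook, so your explicit induction (and alternative homogeneous-plus-particular-solution derivation) is precisely what was intended, only spelled out in more detail.
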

\begin{proof}{}
By the standard method of solving linear recurrence relations. See
Anderson \cite{And}, for example.
\end{proof}
\vspace{4mm}

\noindent {\bf Acknowledgment}. We would like to gratefully
acknowledge support from  King Fahd University of Petroleum \&
Minerals, and Sultan Qaboos University.

\baselineskip18pt

\end{document}